\newcommand{\R}{\mathbb{R}}
\newcommand{\N}{\mathbb{N}}
\newcommand{\eps}{\varepsilon}
\newcommand{\Ph}{\R}
\newcommand{\diff}[1]{\partial_{#1}}
\newcommand{\D}{\R^d}
\newcommand{\Ddk}{\left(\R^d\right)^k}
\newcommand{\TD}{\R_+}
\newcommand{\DT}{\D\times\TD}
\newcommand{\sigmaalgebra}{{\mathcal{X}}}
\newcommand{\dd}{\ensuremath{\;d}}
\newcommand{\Dx}{{\ensuremath{\Delta}}}
\newcommand{\Dt}{{\ensuremath{\Delta t}}}
\newcommand{\Ypair}[2]{\bigl\langle #1, #2\bigr\rangle}
\newcommand{\NumericalEvolution}[1]{\ensuremath{\mathcal{S}^{#1}}}
\newcommand{\Evolution}{\ensuremath{\mathcal{S}}}
\newcommand{\cm}{\ensuremath{\bm{\nu}}}
\newcommand{\Prb}{\ensuremath{\mathbb{P}}}
\newcommand{\Prbm}{\ensuremath{\mathcal{P}}}
\newcommand{\FKMT}[3]{\ensuremath{E^{#1}_{#3}(#2)}}
\newcommand{\divx}[1]{\ensuremath{\nabla_x \cdot #1}}
\newcommand{\gradx}[1]{\ensuremath{\nabla_x #1}}
\renewcommand{\vec}[1]{\ensuremath{\mathbf{#1}}}
\newcommand{\Wasserstein}[1]{\ensuremath{W_{#1}}} 
\newcommand{\WorkFVM}[2]{\text{Work}_{\text{FVM}}(#1, #2)}
\newcommand{\WorkFVMCFL}[1]{\text{Work}_{\text{FVM}}^{#1}}
\newcommand{\WorkMLMC}[1]{\text{Work}_{\text{MLMC}}^{#1}}
\newcommand{\WorkMC}[2]{\text{Work}_{\text{MC}}(#1,#2)}
\newcommand{\bigO}{\mathcal{O}}
\newcommand{\ddt}{\ensuremath{\frac{\text{d}}{\text{d}t}}}
\newcommand{\hf}{{\ensuremath{\nicefrac{1}{2}}}}
\newcommand{\iphf}{{i+\hf}}
\newcommand{\imhf}{{i-\hf}}
\newcommand{\pushforward}[2]{#1\##2}
\renewcommand{\leq}{\leqslant}
\renewcommand{\geq}{\geqslant}
\DeclareMathOperator{\Var}{Var}
\DeclareMathOperator{\TV}{TV}
\def\Xint#1{\mathchoice
{\XXint\displaystyle\textstyle{#1}} 
{\XXint\textstyle\scriptstyle{#1}} 
{\XXint\scriptstyle\scriptscriptstyle{#1}} 
{\XXint\scriptscriptstyle\scriptscriptstyle{#1}} 
\!\int}
\def\XXint#1#2#3{{\setbox0=\hbox{$#1{#2#3}{\int}$ }
\vcenter{\hbox{$#2#3$ }}\kern-.56\wd0}}
\def\intavg{\Xint-}
\newtheorem{theorem}{Theorem}
\theoremstyle{definition}
\newtheorem{Definition}{Definition}
\newtheorem{corollary}{Corollary}
\newtheorem{remark}{Remark}
\newlength\figureheight
\newlength\figurewidth
\newcommand{\InputImage}[3]{}
	\renewcommand{\InputImage}[3]{%
		\setlength\figureheight{#2}%
		\setlength\figurewidth{#1}%
		\input{img_tikz/#3.tikz}%
	}%
	\renewcommand{\InputImage}[3]{%
		
		\includegraphics[width=#1]{#3.pdf}%
	}%
\pgfplotsset{every tick label/.append style={font=\tiny}}
\pgfplotsset{
	tick label style = {font = {\fontsize{6 pt}{12 pt}\selectfont}},
	label style = {font = {\fontsize{6 pt}{12 pt}\selectfont}},
	legend style = {font = {\fontsize{6 pt}{12 pt}\selectfont}},  
}
\title[Computing statistical solutions]{Numerical approximation of statistical solutions \\ of scalar conservation laws}
\author{U. S. Fjordholm}
\address{Department of Mathematics, University of Oslo, Postboks 1053 Blindern, 0316 Oslo, Norway}
\email{ulriksf@math.uio.no}
\author{K. Lye \and S. Mishra}
\address{Seminar for Applied Mathematics, ETH Z\"urich, R\"amistrasse 101, 8092 Z\"urich, Switzerland}
\email[K. Lye]{kjetil.lye@sam.math.ethz.ch}
\email[S. Mishra]{siddhartha.mishra@sam.math.ethz.ch}
\begin{document}
\begin{abstract}
We propose efficient numerical algorithms for approximating statistical solutions of scalar conservation laws. The proposed algorithms combine finite volume spatio-temporal approximations with Monte Carlo and multi-level Monte Carlo discretizations of the probability space. Both sets of methods are proved to converge to the entropy statistical solution. We also prove that there is a considerable gain in efficiency resulting from the multi-level Monte Carlo method over the standard Monte Carlo method. Numerical experiments illustrating the ability of both methods to accurately compute multi-point statistical quantities of interest are also presented.
\end{abstract}

\maketitle


\section{Introduction}

Hyperbolic systems of conservation laws are a large class of nonlinear partial differential equations that arise in a wide variety of models in physics and engineering \cite{Dafermos}. Prominent examples include the shallow water equations of oceanography, compressible Euler equations of aerodynamics and the magnetohydrodynamics (MHD) equations of plasma physics. 

The simplest example for this class of PDEs is provided by the scalar conservation law
\begin{equation}
	\label{eq:conservation_law}
	\begin{split}
	u_t+\divx{f(u)} = 0 \\
	u(x,0) = \bar{u}(x)
	\end{split}
\end{equation}
where $u : \DT\to\R$ is the conserved variable and $f:\R\to\R^d$ is the flux function.  A prototypical example of a scalar conservation law is the Burgers equation, i.e \eqref{eq:conservation_law} with $d=1$ and flux function $f(u) = \frac{1}{2} u^2$.  

It is well known that discontinuities, such as shock waves, can arise in the solution of \eqref{eq:conservation_law} even when the initial data $\bar{u}$ is smooth. Hence, one seeks solutions of \eqref{eq:conservation_law} in the sense of distributions. These \emph{weak} solutions are not necessarily unique and need to be augmented with additional admissibility criteria called \emph{entropy conditions} to ensure uniqueness. \emph{Entropy solutions} of the multi-dimensional scalar conservation law \eqref{eq:conservation_law} exist as long as $\bar{u}\in L^1\cap L^\infty(\R^d)$, and are unique and stable with respect to the initial data:
\[
\big\|\Evolution(t)\bar{u}-\Evolution(t)\bar{v}\big\|_{L^1(\R^d)} \leq \|\bar{u}-\bar{v}\|_{L^1(\R^d)} \qquad \forall\ \bar{u},\bar{v}\in L^1\cap L^\infty(\R^d).
\]
Here, $\Evolution(t) : \bar{u} \mapsto u(t)$ denotes the data to solution operator of \eqref{eq:conservation_law}.

On the other hand, similar well-posedness results for systems of conservation laws are only available in one space dimension \cite{Bressan}. There are no global existence results for generic multi-dimensional systems of conservation laws. Moreover,  it is now known that entropy solutions for (some examples) of multi-dimensional systems may not be unique \cite{delellis_nonunique1,delellis_isentropic}.  

Given this paucity of well-posedness results for systems of conservation laws, it is natural to seek alternative solution frameworks. A possible solution paradigm is that of entropy measure valued solutions \cite{diperna}, where the sought for solution is not necessarily an integrable function but rather a  Young measure, i.e.~a space-time parametrized probability measure. Although entropy measure-valued solutions exist globally (even for multi-dimensional systems) and can be approximated with Monte Carlo type ensemble averaging numerical algorithms \cite{fkmt}, these solutions are not unique, even for scalar conservation laws \eqref{eq:conservation_law}, in particular in the case when the initial Young measure is not concentrated on a single integrable function. The reason behind this non-uniqueness is the lack of information on spatial correlations (conditional probabilities) that is intrinsic to the notion of Young measures.  

In a recent paper \cite{FLM17}, the authors proposed another solution concept, namely \emph{statistical solutions}, to supplement measure-valued solutions with information about multi-point spatial correlations. Statistical solutions are time-parametrized probability measures on $L^p(D)$, where $D \subset \R^d$ is the spatial domain and $1\leq p < \infty$. It was proved in \cite{FLM17} that these probability measures on spaces of $p$-integrable functions are identified with a hierarchy of Young measures, termed as \emph{correlation measures}. Consequently, the time-evolution of statistical solutions is completely determined in terms of an infinite family of nonlinear PDEs for a class of moments of the corresponding correlation measures. It was shown in \cite{FLM17} that \emph{entropy statistical solutions} of the scalar conservation law \eqref{eq:conservation_law} always exist, are unique and stable with respect to initial data. This is in contrast to earlier notions of statistical solutions for Burgers' equation that are defined as probability measures on distributions \cite{Duchon1, Duchon2,Bertoin}. See also \cite{FoiasTemam} for a different notion of statistical solutions for the incompressible Navier--Stokes equations. 

Statistical solutions can also be considered as a framework for uncertainty quantification (UQ), i.e.~the modeling, analysis and efficient numerical approximation of uncertainty in the solutions of PDEs, given uncertainties in their inputs such as the initial and boundary data, fluxes, coefficients etc.~\cite{UQbook}. For instance, the initial datum $\bar{u}$ in \eqref{eq:conservation_law} is a measured (observed) quantity, and these measurements are not exact. Often, one needs to model the underlying input uncertainty and the resulting solution uncertainty statistically. It is common practice to model both input and solution uncertainty in terms of random fields (often characterized by a large but finite number of parameters). A popular notion of uncertainty modeling for conservation laws is that of \emph{random entropy solutions} introduced in \cite{mlmc_hyperbolic,mss1}. However,  random entropy solutions rely on a particular parametrization of the underlying uncertainty. On the other hand, the framework of statistical solutions offers a parametrization-independent modeling of uncertainty where only the  law of random fields, i.e.~a probability measure on the underlying function space, is described and evolved, cf.~\cite{FLM17,abgrall_mishra}.  

The study of scalar conservation laws with random initial conditions also plays an important role in turbulence research \cite{fris1}. The results on \emph{burgulence} can also be reinterpreted within the framework of statistical solutions of \eqref{eq:conservation_law}.

Given the above discussion, we focus on the efficient numerical approximation of statistical solutions for scalar conservation laws \eqref{eq:conservation_law} in this article. We will approximate the spatio-temporal domain with standard finite volume/finite difference numerical schemes. The probability space is {discretized} with a Monte Carlo method. We prove that that this finite-volume Monte Carlo method converges, in an appropriate topology, to the entropy statistical solution of \eqref{eq:conservation_law} as the mesh is refined and the number of Monte Carlo samples is increased. A complexity analysis demonstrates that this procedure can be rather expensive on account of the slow convergence (in terms of samples) of the Monte Carlo method. 

In order to improve computational efficiency, we propose and analyze a multi-level Monte Carlo (MLMC) version of our numerical method. MLMC methods were first proposed in \cite{Heinrich2001,giles} and were applied to UQ for conservation laws in \cite{mlmc_hyperbolic}. We adapt the MLMC procedure to our setup and prove that the MLMC method also converges to the entropy statistical solution. Moreover, we show that the MLMC method is significantly more efficient than its Monte Carlo counterpart. We illustrate the performance of both set of methods on a suite of numerical test problems.

The rest of the article is organized as follows. We define statistical solutions of \eqref{eq:conservation_law} in \Cref{sec:stat}. Finite volume schemes for approximating the underlying deterministic problem are described in \Cref{sec:fv}. Monte Carlo and multi-level Monte Carlo methods are presented in \Cref{sec:mc,sec:mlmc}, respectively. We present a set of numerical experiments illustrating our methods in \Cref{sec:numex}. 


\section{Statistical solutions of conservation laws}
\label{sec:stat}
In this section we briefly describe the framework of statistical solutions introduced in \cite{FLM17}. As mentioned above, we are interested in the situation where instead of an initial data $\bar{u}\in L^1(\R^d)$ for \eqref{eq:conservation_law}, we are given some $\bar{\mu}\in \Prbm\big(L^1(\R^d)\big)$, that is, a probability distribution over different initial data $\bar{u}\in L^1(\R^d)$. A statistical solution of this initial value problem is a map $t \mapsto \mu_t\in\Prbm\big(L^1(\R^d)\big)$ which satisfies the PDE \eqref{eq:conservation_law} in a certain sense. In \cite{FLM17}, the authors showed that any probability measure $\mu\in\Prbm(L^1(\R^d))$ can be described equivalently as a \emph{correlation measure}---a hierarchy $\cm=(\nu^1,\nu^2,\dots)$ in which each element $\nu^k$ provides the joint probability distribution $\nu^k_{x_1,\dots,x_k}$ of the solution values $u(x_1),\dots,u(x_k)$ at any choice of spatial points $x_1,\dots,x_k\in \R^d$. The evolution equation for $\mu_t$ is most naturally described in terms of its corresponding correlation measure, yielding an infinite hierarchy of evolution equations. In particular, the equation for the one-point distribution $\nu^1_x$ coincides with that of \emph{measure-valued solutions}, cf.~DiPerna~\cite{diperna}. Most statistical observables of a fluid flow can be directly expressed in terms of correlation marginals.  For instance, all one-point statistics such as the mean flow can be expressed in terms of $\nu^1$, while the \emph{structure functions} of turbulence theory are easily expressed in terms of $\nu^2$ (see Section \ref{sec:numex}).

\newcommand{\phase}{\mathcal{U}}
\begin{Definition}
Let $d,N\in\N$, let $q\in[1,\infty)$, let $D\subset\R^d$ be an open set (the ``spatial domain'') and (for notational convenience) denote $\phase = \R^N$ (``phase space''). A \textit{correlation measure from $D$ to $\phase$} is a collection $\cm = (\nu^1,\nu^2,\dots)$ of maps satisfying for every $k\in\N$:
\begin{enumerate}[label=\it(\roman*)]
\item $\nu^k$ is a Young measure from $D^k$ to $\phase^k$.
\item \textit{Symmetry:} if $\sigma$ is a permutation of $\{1,\dots,k\}$ and $f\in C_0(\phase^k)$ then $\Ypair{\nu^k_{\sigma(x)}}{f(\sigma(\xi))} = \Ypair{\nu^k_{x}}{f(\xi)}$ for a.e.\ $x\in D^k$. 
\item \textit{Consistency:} If $f\in C_b(\phase^k)$ is of the form $f(\xi_1,\dots,\xi_k) = g(\xi_1,\dots,\xi_{k-1})$ for some $g\in C_0(\phase^{k-1})$, then $\Ypair{\nu^k_{x_1,\dots,x_k}}{f} = \Ypair{\nu^{k-1}_{x_1,\dots,x_{k-1}}}{g}$ for almost every $(x_1,\dots,x_k)\in D^k$.
\item \textit{$L^q$ integrability:} 
\begin{equation}\label{eq:corrlqbound_stationary}
\int_D \Ypair{\nu^1_{x}}{|\xi|^q}\,dx < \infty.
\end{equation}
\item\textit{Diagonal continuity (DC):} $\lim_{\eps\to0}d_\eps^q(\nu^2) = 0$, where
\begin{equation}\label{eq:defd_eps}
d_\eps^q(\nu^2) := \left(\int_D \intavg_{B_\eps(x)} \Ypair{\nu^2_{x,y}}{|\xi_1-\xi_2|^q}\,dydx\right)^{1/q}.
\end{equation}
(Here, $\intavg_B = \frac{1}{|B|}\int_B$, the average over $B$.)
\end{enumerate}
\end{Definition}

The sense in which correlation measures and probability measures on $L^p$ are equivalent is made more precise in the following definition:
\begin{Definition}
A probability measure $\mu \in \Prbm(L^q(D;\phase))$ is said to be \emph{dual} to a correlation measure $\cm$ from $D$ to $\phase$ provided
\begin{equation}
\int_{D^k} \Ypair{\nu^k_{x}}{g(x,\cdot)}\, dx = \int_{L^2}\int_{D^k} g(x,v(x_1),\dots,v(x_k))\,dxd\mu(v)
\end{equation}
for every $k\in\N$ and for every Caratheodory function $g : D^k \to C_b(\phase^k)$.
\end{Definition}

It was shown in \cite{FLM17} that every probability measure $\mu \in \Prbm(L^q(D;\phase))$ is dual to a unique correlation measure $\cm$, and \textit{vice versa}. Using this duality, we can now state the definition of statistical solutions.

\begin{Definition}
	We say that a weak*-measurable map $t\mapsto \mu_t\in\Prbm(L^1(\D))$ with corresponding spatial correlation measures $\cm_t = (\nu_t^{k})_{k\in\N}$, is a \emph{statistical solution} of \eqref{eq:conservation_law} with initial data $\bar{\mu}\in\Prbm(L^1(\D))$, if
	\begin{equation}
		\begin{split}
		\int_{\TD}\int_{\Ddk}\Ypair{\nu_{t,x}^{k}}{\xi^1\cdots\xi^k}\diff{t}\phi+	\sum_{i=1}^k\Ypair{\nu_{t,x}^{k}}{\xi^1\cdots f(\xi^i)\cdots \xi^k}\gradx\phi\dd x \dd t \\ +\int_{\Ddk}\Ypair{\bar{\nu}^k_x}{\xi^1\cdots\xi^k}\phi|_{t=0}\dd x=0,
		\end{split}
	\end{equation}
	for all $\phi\in C_c^\infty\big(\Ddk\times\TD\big)$ and all $k\in\N$.
\end{Definition}


We define the \emph{canonical statistical solution} as $\mu_t = \pushforward{\Evolution(t)}{\bar{\mu}}$, that is, the measure satisfying
\[
\int_{L^1} G(u)\,d\mu_t(u) = \int_{L^1} G(\Evolution(t)\bar{u})\,d\bar{\mu}(\bar{u})
\]
for every $G\in C_b(L^1(\D))$. It is straightforward to show that this indeed constitutes a statistical solution of \eqref{eq:conservation_law} with initial data $\bar{\mu}$. Moreover, in \cite{FLM17} it was shown that under an additional entropy condition on the statistical solution, any two \emph{entropy statistical solutions} (see Section 4.2 of \cite{FLM17} for a precise definition) $\mu_t,\rho_t$ satisfy
\[
\Wasserstein{1}(\mu_t,\rho_t) \leq \Wasserstein{1}(\bar{\mu},\bar{\rho}),
\]
where $\Wasserstein{1}$ is the 1-Wasserstein distance on the set of probability measures $\Prbm(L^1(\D))$. In particular, the entropy statistical solution is unique and coincides with the canonical statistical solution. We refer to \cite{FLM17} for further details.

\section{Numerical approximation of statistical solutions}
\label{sec:fv}
In this section we introduce numerical approximations of statistical solutions, which are based on standard finite volume methods (FVM) for \eqref{eq:conservation_law}. In Section \ref{sec:fvm} we give a short description of FVM for scalar conservation laws, and in Section \ref{sec:convstatsoln} we prove that FVM applied to statistical ensembles converges as the mesh is refined. We postpone the discretization of the probability space to Section \ref{sec:montecarlo}.

\subsection{Finite volume methods for conservation laws}\label{sec:fvm}
This section briefly describes the conventional approach of numerically approximating conservation laws through finite volume and finite difference methods. For a complete review, one can consult~\cite{leveque_green}.

We discretize the computational spatial domain as a collection of cells
\[\big\{(x^1_{i_1-\hf}, x^1_{i_1+\hf})\times\cdots\times(x^d_{i^d-\hf}, x^d_{i^d+\hf})\big\}_{(i^1,\ldots,i^d)}\subset \D,\]
with corresponding cell midpoints
\[x_{i^1,\ldots, i^d}:=\left(\frac{x^1_{i_1+\hf}+x^1_{i_1-\hf}}{2},\ldots,\frac{x^d_{i_d+\hf}+x^d_{i_d-\hf}}{2}\right).\]
For simplicity, we assume that our mesh is equidistant, that is,
\[x^k_{i^k+\hf} - x^k_{i^k-\hf} \equiv \Dx \qquad \forall\ k=1,\dots,d\]
for some $\Dx>0$. We will describe the semi-discrete case. For each cell, marked by $(i^1,\ldots, i^d)$, we let $u^{\Dx}_{i^1,\ldots, i^d}(t)$ denote the averaged value in the cell at time $t\geq 0$.

We use the following semi-discrete formulation:
\begin{equation}\label{eq:semi_d}
\begin{split}
\ddt{}u^{\Dx}_{i^1,\ldots,i^d}(t)+\sum_{k=1}^d\frac{1}{\Dx}\left(F^{k,\Dx}_{i^1,\ldots,i^k+\hf,\ldots, i^d}(t)-F^{k,\Dx}_{i^1,\ldots,i^k-\hf,\ldots, i^d}(t)\right)= 0  \\
u^{\Dx}_{i^1,\ldots,i^d}(0) = u_0(x_{i^1,\ldots,i^d})
\end{split}
\end{equation}
where $F^{k,\Dx}$ is a \emph{numerical flux function} in direction $k$. In a \emph{$(2p+1)$-point scheme,} the numerical flux function $F^{k,\Dx}_{i^1,\ldots,i^k+hf,\ldots,i^d}(t)$ can be written as a function of $u^{\Dx}_{i^1,\ldots,j^k,\ldots,i^k}(t)$ for $j^k=i^k-p+1,\ldots,i^k+p$. We furthermore assume the numerical flux function is consistent with $f$ and locally Lipschitz continuous, which amounts to requiring that for every bounded set $K\subset \R$, there exists a constant $C>0$ such that for $k=1,\ldots,d$,
\begin{equation}
	|F^{k,\Dx}_{i^1,\ldots,i^d}(t)-f(u^{\Dx}_{i^1,\ldots,i^d})|\leq C\sum_{j^k=i^1-p+1}^{i^k+p}|u^{\Dx}_{i^1,\ldots,i^d}(t)-u^{\Dx}_{i^1,\ldots, j^k,\ldots,i^d}(t)|,
\end{equation}
whenever $u^{\Dx}_{i^1,\ldots, i^k-p+1,\ldots,i^d}(t),\ldots, u^{\Dx}_{i^1,\ldots, i^k+p,\ldots, i^d}(t) \in K$.

We let
$\NumericalEvolution{\Dx}:L^\infty(\D)\to L^\infty(\DT)$
be the spatially discrete numerical evolution operator corresponding to \eqref{eq:semi_d}. Since $\NumericalEvolution{\Dx}$ is the composition of a projection from $L^\infty$ onto piecewise constant functions and a continuous evolution under an ordinary differential equation, we see that $\NumericalEvolution{\Dx}$ is measurable.

The current form of \eqref{eq:semi_d} is continuous in time, and one needs to employ a time stepping method to discretize the ODE in time, usually through some strong stability preserving Runge--Kutta method \cite{GST}.

\subsection{Spatial convergence of statistical solutions for scalar equations}\label{sec:convstatsoln}
We are interested in measuring convergence to the statistical solution of \eqref{eq:conservation_law} in the $1$-Wasserstein distance on $\Prbm(L^1(\D))$, defined here as
\begin{equation}
	\label{eq:wasserstein}
	\Wasserstein{1}(\mu_1, \mu_2)=\inf_{\pi\in\Pi(\mu_1,\mu_2)}\left(\int_{L^1(\D)^2}\|u_1-u_2\|_{L^1(\D)}\dd \pi(u_1,u_2)\right).
\end{equation}
Let $\NumericalEvolution{\Dx}$ be some numerical evolution operator, then we set
\begin{equation}
	\mu^\Dx_t:=\pushforward{\NumericalEvolution{\Dx}(t)}{\bar{\mu}}.
\end{equation}

\begin{theorem}
\label{thm:fv}
	Let $\bar{\mu}\in\Prbm(L^1(\D))$ be the initial data.
    For every $t\in\R^+$, define $\mu^\Dx_t:=\pushforward{\NumericalEvolution{\Dx}(t)}{\bar{\mu}}$ and $	\mu_t:=\pushforward{\Evolution(t)}{\bar{\mu}}$. Assume the numerical scheme has a convergence rate of $s$ for the given initial data, i.e,
    \begin{equation} 
    \label{eq:convergence_requirement}
    \int_{L^1(\D)}\big\|\Evolution(t)(\bar{u})- \NumericalEvolution{\Dx}(t)(\bar{u})\big\|_{L^1(\D)}\dd \bar{\mu}(\bar{u})\leq C\Dx^s, \end{equation}
    for some $s>0$. Then 
	\begin{equation}
		\label{eq:spatial_wasserstein_scalar}
		\Wasserstein{1}(\mu^\Dx_t, \mu_t)\leq C\Dx^{s}.
	\end{equation}
\end{theorem}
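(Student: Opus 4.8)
The plan is to bound $\Wasserstein{1}(\mu^\Dx_t,\mu_t)$ from above by evaluating the infimum in \eqref{eq:wasserstein} at a single, explicitly constructed transport plan, namely the \emph{diagonal coupling} induced by the common initial measure $\bar\mu$. Concretely, I would set
\[
\pi \;:=\; \pushforward{\big(\NumericalEvolution{\Dx}(t),\,\Evolution(t)\big)}{\bar\mu},
\]
that is, the law on $L^1(\D)\times L^1(\D)$ of the pair $\big(\NumericalEvolution{\Dx}(t)\bar u,\,\Evolution(t)\bar u\big)$ when $\bar u\sim\bar\mu$. For this to be a well-defined probability measure one needs the map $\bar u\mapsto\big(\NumericalEvolution{\Dx}(t)\bar u,\,\Evolution(t)\bar u\big)$ to be (Borel) measurable: measurability of $\NumericalEvolution{\Dx}(t)$ was already observed in Section~\ref{sec:fvm}, and measurability of $\Evolution(t)$ follows from the $L^1$-contraction estimate recalled in the introduction, which makes $\Evolution(t)$ Lipschitz, hence continuous, hence Borel, on the relevant subset of $L^1\cap L^\infty(\D)$ (where $\bar\mu$ is effectively supported in view of \eqref{eq:convergence_requirement}). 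A product of measurable maps into a product space being measurable, $\pi$ is well-defined.

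Next I would check that $\pi\in\Pi(\mu^\Dx_t,\mu_t)$, i.e.\ that it has the prescribed marginals. Pushing $\pi$ forward under the first coordinate projection gives $\pushforward{\NumericalEvolution{\Dx}(t)}{\bar\mu}=\mu^\Dx_t$, and under the second coordinate projection gives $\pushforward{\Evolution(t)}{\bar\mu}=\mu_t$, both directly from the definitions of $\mu^\Dx_t$ and $\mu_t$; hence $\pi$ is an admissible coupling. Restricting the infimum in \eqref{eq:wasserstein} to this particular $\pi$ and using the change-of-variables formula for pushforward measures then yields
\[
\Wasserstein{1}(\mu^\Dx_t,\mu_t)\;\leq\;\int_{L^1(\D)^2}\|u_1-u_2\|_{L^1(\D)}\dd\pi(u_1,u_2)\;=\;\int_{L^1(\D)}\big\|\Evolution(t)\bar u-\NumericalEvolution{\Dx}(t)\bar u\big\|_{L^1(\D)}\dd\bar\mu(\bar u),
\]
and the right-hand side is bounded by $C\Dx^s$ precisely by the standing assumption \eqref{eq:convergence_requirement}. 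This closes the argument.

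I do not expect any serious obstacle here: the Wasserstein distance is tailor-made to be controlled by couplings of exactly this form, so once the diagonal plan is identified the estimate is immediate. The only point requiring any care — the ``hard part'' such as it is — is the measurability bookkeeping that legitimizes $\pi$ as an element of $\Pi(\mu^\Dx_t,\mu_t)$, together with the mild remark that \eqref{eq:convergence_requirement} already presupposes $\Evolution(t)\bar u$ and $\NumericalEvolution{\Dx}(t)\bar u$ are defined for $\bar\mu$-a.e.\ $\bar u$, so that the integrand above makes sense.
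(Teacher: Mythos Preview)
Your proposal is correct and matches the paper's proof essentially verbatim: the paper also constructs the diagonal coupling $\gamma=\pushforward{(\Evolution(t),\NumericalEvolution{\Dx}(t))}{\bar\mu}$, notes it lies in $\Pi(\mu_t,\mu_t^{\Dx})$, and bounds $\Wasserstein{1}$ by the transport cost of $\gamma$, which equals the left-hand side of \eqref{eq:convergence_requirement}. The only difference is that you spell out the measurability considerations that the paper dismisses as trivial.
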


\begin{proof}
	Define the measure $\gamma\in\Prbm\big(L^1(\D)\times L^1(\R^d)\big)$ by
	\begin{equation}
		\gamma = \pushforward{\big(\Evolution(t), \NumericalEvolution{\Dx}(t)\big)}{\bar{\mu}}.
	\end{equation}
	One can trivially check that  $\gamma\in \Pi(\mu_t,\mu_t^{\Dx})$. We obtain
	\begin{align*}
		\Wasserstein{1}(\mu_t, \mu^\Dx_t) & \leq \int_{L^1(\D)^2}\|u_1-u_2\|_{L^1(\D)}\dd \gamma(u_1,u_2)  \\
		& = \int_{L^1(\D)}\big\|\Evolution(t)(\bar{u})- \NumericalEvolution{\Dx}(t)(\bar{u})\big\|_{L^1(\D)}\dd \bar{\mu}(\bar{u}). \\
	\end{align*}
	Inserting the assumptions of \eqref{eq:convergence_requirement} yields the claim. 
\end{proof}
The above theorem is used below to obtain convergence rates for monotone schemes approximating scalar conservation laws (see \cite{GR,HR} for definitions).
\begin{corollary}
	Let $\bar{\mu}\in\Prbm(L^1(\D))$ be such that
	\[
	\TV(\bar{\mu}):=\int_{L^1(\D)} \TV(\bar{u})\dd \bar{\mu}(\bar{u})<\infty.
	\]
	For every $t\in\R^+$, define $\mu_t:=\pushforward{\Evolution(t)}{\bar{\mu}}$ and $\mu^\Dx_t:=\pushforward{\NumericalEvolution{\Dx}(t)}{\bar{\mu}}$. Then
	\begin{equation}
	\label{eq:spatial_wasserstein_scalar_kuznetsov}
	\Wasserstein{1}(\mu^\Dx_t, \mu_t)\leq C\Dx^{1/2}\TV(\bar{\mu}).
	\end{equation}
\end{corollary}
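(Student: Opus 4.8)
The plan is to reduce the corollary to a direct application of Theorem~\ref{thm:fv} with exponent $s=1/2$; the only real content is verifying the hypothesis \eqref{eq:convergence_requirement} for monotone schemes.

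First I would invoke the classical deterministic convergence theory for monotone finite volume schemes (Kuznetsov's lemma; see \cite{GR,HR}): for data $\bar u\in (L^1\cap L^\infty\cap BV)(\D)$ the scheme is $L^1$-contractive, satisfies a maximum principle and is TVD, and these properties yield an error bound of the form
\[
\big\|\Evolution(t)\bar u-\NumericalEvolution{\Dx}(t)\bar u\big\|_{L^1(\D)}\leq C(t)\,\Dx^{1/2}\,\TV(\bar u),
\]
where $C(t)$ depends only on $t$, the flux $f$, the dimension, and the Lipschitz/consistency constants of the numerical flux, but not on $\bar u$. One mild point: this estimate is classically stated for the fully discrete scheme under a CFL restriction, whereas $\NumericalEvolution{\Dx}$ in \eqref{eq:semi_d} is semi-discrete; the same argument applies, however, since the semi-discrete monotone scheme retains $L^1$-contraction, the maximum principle and the TVD bound.

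Second, I would integrate this pointwise bound against $\bar\mu$. The map $\bar u\mapsto \big\|\Evolution(t)\bar u-\NumericalEvolution{\Dx}(t)\bar u\big\|_{L^1(\D)}$ is measurable, since both $\Evolution(t)$ and $\NumericalEvolution{\Dx}(t)$ are measurable (the latter noted in Section~\ref{sec:fvm}) and the $L^1$-norm is continuous. Since $\TV(\bar\mu)<\infty$, we have $\TV(\bar u)<\infty$ for $\bar\mu$-a.e.\ $\bar u$, so Tonelli's theorem gives
\[
\int_{L^1(\D)}\big\|\Evolution(t)\bar u-\NumericalEvolution{\Dx}(t)\bar u\big\|_{L^1(\D)}\,d\bar\mu(\bar u)\leq C(t)\,\Dx^{1/2}\int_{L^1(\D)}\TV(\bar u)\,d\bar\mu(\bar u)=C(t)\,\Dx^{1/2}\,\TV(\bar\mu).
\]
This is exactly \eqref{eq:convergence_requirement} with $s=1/2$ and constant $C(t)\TV(\bar\mu)$, so Theorem~\ref{thm:fv} delivers \eqref{eq:spatial_wasserstein_scalar_kuznetsov}.

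The main obstacle is the first step: one must check that the deterministic half-order rate holds for the semi-discrete operator actually used here and, crucially, that the constant multiplying $\Dx^{1/2}\,\TV(\bar u)$ is uniform over the support of $\bar\mu$ — i.e.\ that it depends on $\bar u$ only through $\TV(\bar u)$ (and, implicitly, through an $L^\infty$ bound, which the maximum principle controls once one assumes $\bar\mu$ is carried by $L^1\cap L^\infty\cap BV$). With that uniformity in hand, the remainder is bookkeeping plus the appeal to the preceding theorem.
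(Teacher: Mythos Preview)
Your proposal is correct and follows exactly the paper's approach: invoke Kuznetsov's deterministic $\Dx^{1/2}$ estimate for monotone schemes, integrate it against $\bar\mu$ to verify hypothesis~\eqref{eq:convergence_requirement} with $s=1/2$, and then apply Theorem~\ref{thm:fv}. The paper's proof is a two-line version of yours; your added remarks on measurability, the semi-discrete setting, and uniformity of the constant are reasonable technical caveats that the paper simply takes for granted.
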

\begin{proof}
	The convergence estimate of Kuznetsov \cite{KUZNETSOV1976105} for monotone schemes approximating scalar conservation laws yields
	\[\big\|\Evolution(t)(\bar{u})- \NumericalEvolution{\Dx}(t)(\bar{u})\big\|_{L^1(\D)}\leq C\sqrt{\Dx}\TV(\bar{u})\]
	for every $\bar{u}\in L^1$ with $\TV(\bar{u}) < \infty$. The estimate \eqref{eq:spatial_wasserstein_scalar_kuznetsov} now follows from \eqref{eq:spatial_wasserstein_scalar}.
\end{proof}



\section{Monte Carlo approximations of statistical solutions}\label{sec:montecarlo}
\label{sec:mc}
Let $\bar{\mu}\in \Prbm(L^1(\D))$ be the given initial measure. We choose a probability space $(\Omega,\sigmaalgebra,\Prb)$ and a random field $\bar{u}\in L^2\big(\Omega; L^1(\D)\big)$ such that the law of $\bar{u}$ with respect to $\Prb$ is $\bar{\mu}$. In most real-world applications, the uncertainty in initial data is usually described in terms of such a random field $\bar{u}$, for instance, one given as a parametric function $\bar{u}:[0,1]^Q\times \D\to \Ph$, with possibly $Q \gg 1$. Draw $M$ independent and identically distributed (i.i.d.) samples $\bar{u}_1,\dots,\bar{u}_M$ of $\bar{u}$ and for $1 \leq k \leq M$, set \[u_k(\omega, \cdot, t):=\NumericalEvolution{\Dx}(t)(\bar{u}_k).\]
We define the Monte Carlo $\mu_t^{\Dx,M}$ approximation as
\[\mu_t^{\Dx,M}:=\frac{1}{M}\sum_{k=1}^M \delta_{u_k(\omega; \cdot, t)},\]
or equivalently, through its action on an integrable function,
\[\Ypair{\mu^{\Dx,M}_t}{G}=\frac{1}{M}\sum_{k=1}^M G(u_k(\omega; \cdot, t))\qquad G\in C_b(L^1(\D)).\]

The relevant notion of convergence on $\Prbm(L^1(\D))$ is that of weak convergence. We say that a sequence $\{\nu_n\}_{n\in\N}\subset \Prbm(L^1(\D))$ converges weakly to $\nu\in \Prbm(L^1(\D))$ if
\[\Ypair{\nu_n}{G}\to\Ypair{\nu}{G}\qquad \text{for all } G\in C_b(L^1(\D)).\]
It can be shown that weak convergence preserves the probability measure structure, that is, $\nu \geq 0$ and $\nu(X)=1$. 

Using standard Monte Carlo techniques we obtain the following convergence theorem for our Monte Carlo method.

\begin{theorem}\label{thm:mc}
	Let $\bar{\mu}\in\Prbm\big(L^1(\D)\big)$ be the initial data.
	Furthermore, let $t\in\R_+$, $\mu^\Dx_t:=\pushforward{\NumericalEvolution{\Dx}(t)}{\bar{\mu}}$ and let  $\mu^{\Dx,M}_t$ be the Monte Carlo approximation. Then
	\begin{equation}
		\label{eq:weak_conv_mc}
		\big\|\Ypair{\mu^{\Dx,M}_t-\mu^\Dx_t}{G}\big\|_{L^2(\Omega)}= \frac{\sqrt{\Var(G(u^{\Delta x}(\cdot, \cdot, t)))}}{M^{1/2}}
	\end{equation}
	for all $G\in C\big(L^1(\D)\big)$. Here,
	\begin{equation}
		\label{eq:mc_assumption}
		\Var(G(u^{\Delta x}(\cdot, \cdot, t))) := \int_{L^1(\D)}G(u)^2\dd \mu_t(u)-\bigg(\int_{L^1(\D)}G(u)\dd \mu_t(u)\bigg)^2.
	\end{equation}
\end{theorem}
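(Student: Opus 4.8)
The plan is to recognize \eqref{eq:weak_conv_mc} as nothing more than the exact variance identity for the sample mean of i.i.d.\ random variables, combined with the fact that the pushforward $\mu^\Dx_t = \pushforward{\NumericalEvolution{\Dx}(t)}{\bar\mu}$ is precisely the law of $u^{\Dx}(\omega,\cdot,t) := \NumericalEvolution{\Dx}(t)(\bar u(\omega))$. First I would fix $G\in C(L^1(\D))$ and set $X_k := G(u_k(\omega,\cdot,t)) = G\bigl(\NumericalEvolution{\Dx}(t)(\bar u_k)\bigr)$ for $k=1,\dots,M$. Since $\bar u_1,\dots,\bar u_M$ are i.i.d.\ with law $\bar\mu$ and $\NumericalEvolution{\Dx}(t)$ is a fixed measurable map (measurability was noted in Section~\ref{sec:fvm}), the $X_k$ are i.i.d.\ real random variables, each with law equal to the pushforward of $\bar\mu$ under $G\circ\NumericalEvolution{\Dx}(t)$; equivalently $\E[\varphi(X_k)] = \int_{L^1(\D)}\varphi(G(u))\,d\mu^\Dx_t(u)$ for bounded Borel $\varphi$. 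In particular $\E[X_k] = \Ypair{\mu^\Dx_t}{G}$ and $\Var(X_k) = \Var(G(u^{\Dx}(\cdot,\cdot,t)))$ as defined in \eqref{eq:mc_assumption} (with $\mu^\Dx_t$ in place of $\mu_t$, which is the intended reading).

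Next I would simply compute. By definition of the Monte Carlo approximation, $\Ypair{\mu^{\Dx,M}_t}{G} = \frac1M\sum_{k=1}^M X_k$, so
\[
\Ypair{\mu^{\Dx,M}_t - \mu^\Dx_t}{G} = \frac1M\sum_{k=1}^M\bigl(X_k - \E[X_k]\bigr).
\]
Taking the $L^2(\Omega)$ norm and squaring, the cross terms vanish by independence (and mean-zero centering), leaving
\[
\big\|\Ypair{\mu^{\Dx,M}_t - \mu^\Dx_t}{G}\big\|_{L^2(\Omega)}^2 = \frac{1}{M^2}\sum_{k=1}^M \Var(X_k) = \frac{\Var(G(u^{\Dx}(\cdot,\cdot,t)))}{M},
\]
and taking square roots gives exactly \eqref{eq:weak_conv_mc}. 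One should note that $G\in C(L^1(\D))$ together with $u^{\Dx}(\cdot,\cdot,t)\in L^2(\Omega;L^1(\D))$ is what guarantees $X_k\in L^2(\Omega)$, so that the variance is finite and the manipulation is legitimate; this uses that $\NumericalEvolution{\Dx}(t)$ maps $L^1$-bounded data to $L^2(\Omega;L^1)$-bounded fields, which follows from the $L^1$-contraction/stability of the scheme (and for the full statement one implicitly restricts to $G$ for which the right-hand side is finite).

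There is no real obstacle here: the statement is an equality, not an inequality, and it is the textbook variance-of-the-sample-mean computation transported through the pushforward. The only points demanding a sentence of care are (i) confirming measurability of $\omega\mapsto X_k(\omega)$, which reduces to measurability of $\NumericalEvolution{\Dx}(t)$ (already established) composed with the continuous $G$; (ii) confirming that the $X_k$ inherit independence from the $\bar u_k$, which is immediate since each is a deterministic function of a single $\bar u_k$; and (iii) matching the notation in \eqref{eq:mc_assumption}, where $\mu_t$ should be read as $\mu^\Dx_t$ for the identity to hold on the nose. If one instead insists on the literal $\mu_t$, then \eqref{eq:weak_conv_mc} would only be asymptotically correct as $\Dx\to0$, so I would add a remark that the clean exact identity is the one with $\mu^\Dx_t$.
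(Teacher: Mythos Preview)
Your proposal is correct and follows essentially the same route as the paper: both reduce the statement to the elementary variance-of-the-sample-mean identity for the i.i.d.\ real random variables $G(u_k^{\Dx}(\omega,\cdot,t))$, using that $\Ypair{\mu^\Dx_t}{G}$ is their common mean. The paper simply invokes ``the standard Monte Carlo estimate'' (citing a reference) where you expand the square and use independence to kill the cross terms; you are also more explicit about measurability and correctly flag that \eqref{eq:mc_assumption} should have $\mu^\Dx_t$ rather than $\mu_t$ for the identity to be exact.
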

\begin{proof}
	We have
	\begin{equation}
		\Ypair{\mu^\Dx_t}{G} = \int_{\Omega} G(u^{\Dx}(\omega', \cdot, t))\dd \Prb(\omega').
	\end{equation}
	Hence,
	\begin{align*}
    \big\|\Ypair{\mu^{\Dx,M}_t-\mu^\Dx_t}{G}\big\|_{L^2(\Omega)}^2 & =\int_\Omega\bigg[\int_{\Omega} G(u^{\Dx}(\omega', \cdot, t))\dd \Prb(\omega')-\frac{1}{M}\sum_{k=1}^M G(u_k(\omega; \cdot, t))\bigg]^2\dd\Prb(\omega) \\
	& = \frac{\Var(G(u^{\Delta x}(\cdot, \cdot, t)))}{M},
	\end{align*}
	where the key observation is that $G(u^{\Dx}(\omega, \cdot, t))$ is a real-valued random variable, hence we can directly apply the standard Monte Carlo estimate found in \cite{ledoux1991}. 
\end{proof}
Note that if $G\in C_b(L^1(\D))$ then $\Var(G(u^{\Delta x}(\cdot, \cdot, t)))\leq 2\|G\|_{C_b(L^1)}^2<\infty$, and hence the right-hand side of \eqref{eq:weak_conv_mc} is finite. Examples of such test functions $G$ include finite-dimensional observables $G(u)=g\big(\Ypair{\phi_1}{u},\dots,\Ypair{\phi_n}{u}\big)$ for functionals $\phi_1,\dots,\phi_n\in L^1(\R^d)^*=L^\infty(\R^d)$ and $g\in C_b(\R^n)$.
\begin{remark}
	It can be shown that the convergence rate of the Monte Carlo method, measured in the Wasserstein metric for probability measures on $\R^d$, deteriorates as $d$ grows \cite{wasserstein_mc_convergence}, and hence one can not expect to obtain a convergence rate for the Monte Carlo method in the Wasserstein metric for probability measures on $L^1$. However, as the weak topology on probability measures is metrized by the Wasserstein metric, Theorem \ref{thm:mc} enables us to conclude that the Monte Carlo method does converge with respect to the Wasserstein metric as $M \rightarrow \infty$.
\end{remark}

\subsection{Work analysis for Monte Carlo}
\label{subsec:work_analysis_mc}
The \emph{work} of a numerical method is the number of floating point operations it consumes. The classical explicit finite volume method has a work estimate of
\begin{equation}
	\WorkFVM{\Dx}{\Dt} = \bigO(\Dx^{-d}\Dt^{-1}).
\end{equation}
Applying the CFL requirement $\Dt \simeq \Dx$, gives
\[\WorkFVM{\Dx}{\Dt} = \WorkFVMCFL{\Dx} = \bigO(\Dx^{-d-1}).\]
Thus, the work to compute $\FKMT{\Dx}{\bar{u}}{M}$ scales as
\[\WorkMC{\Dx}{M}=M\WorkFVMCFL{\Dx}=\bigO(M\Dx^{-d-1}).\]
If we assume the spatial error scales as
\[\Wasserstein{1}(\mu_t,\mu_t^\Dx)=\bigO(\Dx^s),\]
we then choose the number of samples such that the Monte Carlo error is asymptotically the same as the spatial error. That is, we choose
\[M^{-1/2}\simeq \Dx^s \quad \Leftrightarrow \quad M\simeq \Dx^{-2s}.\]
This gives the optimal work estimate
\begin{equation}
	\label{eq:work_mc}
	\WorkMC{\Dx}{M}=\bigO(\Dx^{-d-1-2s}).
\end{equation}

\section{Multilevel Monte Carlo approximation}
\label{sec:mlmc}
We define the multi-level Monte Carlo approximation similar to that found in \cite{giles} and \cite{mlmc_hyperbolic}. We assume we have a family of nested meshes with mesh lengths $(\Dx_l)_{l=0}^L$, where
\begin{equation}\label{eq:meshwidth}
\Dx_l=2^{-l}\Dx_0\qquad \text{for } l=1,\ldots, L.
\end{equation}
Following the notation in the previous section, we 	define the multi-level Monte Carlo approximation of $\mu_t$ to be
\begin{equation}
	\label{eq:mlmc}
	\mu^{\vec{\Dx},\vec{M}}_t:=\mu_t^{\Dx_0,M_0}+\sum_{l=1}^L\left(\mu_t^{\Dx_l,M_l}-\mu_t^{\Dx_{l-1},M_l}\right),
\end{equation}
where $\vec{M} = (M_0,\dots,M_L) \in \N^{L+1}$ and $\vec{\Dx} = (\Dx_0, \dots,\Dx_L)$. In other words, we set
\[
\mu^{\vec{\Dx},\vec{M}}_t:=\frac{1}{M_0}\sum_{k=1}^{M_0}\delta_{u_k^{\Dx_0}(\omega; \cdot, t)}+\sum_{l=1}^L\frac{1}{M_l}\sum_{k=1}^{M_l}\left(\delta_{u_k^{\Dx_l}(\omega;\cdot, t)} - \delta_{u_k^{\Dx_{l-1}}(\omega;\cdot, t)}\right).
\]

\begin{remark}
$\mu^{\vec{\Dx},\vec{M}}_t$ will in general not be a probability measure on $L^1(\D,\Ph)$, but rather a signed measure on $L^1(\D,\Ph)$.
\end{remark}

\begin{theorem}
	\label{thm:mlmc}
	Let $\bar{\mu}\in\Prbm(L^1(\D,\Ph))$ be the initial data, and for any $t\in\TD$ and $\Dx>0$, let $\mu^{\Dx}_t:=\pushforward{\NumericalEvolution{\Dx}(t)}{\bar{\mu}}$ and let $\mu^{\vec{\Dx},\vec{M}}_t$ be the Multilevel Monte Carlo approximation \eqref{eq:mlmc}. Then 
	\[
	\big\|\Ypair{\mu^{\vec{\Dx},\vec{M}}_t-\mu^{\Dx_L}_t}{G}\big\|_{L^2(\Omega)}\leq \frac{\Var(G(u^{\Dx_0}(\cdot, \cdot, t)))}{M_0^{1/2}} + \sum_{l=1}^L\frac{\Var\left(G(u^{\Dx_l}(\omega;\cdot, t))-G(u^{\Dx_{l-1}}(\omega;\cdot, t))\right)^{1/2}}{M_l^{1/2}},
	\]
	for every $G\in C_b(L^1(\D, \Ph))$. Moreover,
	\[\Var\left(G(u^{\Dx_l}(\omega;\cdot, t))-G(u^{\Dx_{l-1}}(\omega;\cdot, t))\right)\to 0 \qquad \text{as } \Delta \to 0.\]
\end{theorem}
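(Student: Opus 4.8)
The plan is to treat the two assertions separately, since the first is an entirely algebraic rearrangement combined with the single-level estimate and the second requires an analytic ingredient about the behaviour of $\NumericalEvolution{\Dx}(t)$ as the mesh is refined. For the first inequality, I would write the defining telescoping identity $\mu^{\Dx_L}_t = \mu^{\Dx_0}_t + \sum_{l=1}^L(\mu^{\Dx_l}_t - \mu^{\Dx_{l-1}}_t)$, pair against $G$, and subtract from $\Ypair{\mu^{\vec{\Dx},\vec{M}}_t}{G}$. This exhibits $\Ypair{\mu^{\vec{\Dx},\vec{M}}_t - \mu^{\Dx_L}_t}{G}$ as a sum of $L+1$ \emph{independent} centred terms: the level-$0$ term $\Ypair{\mu_t^{\Dx_0,M_0}-\mu^{\Dx_0}_t}{G}$ and, for each $l\geq1$, the term $\Ypair{\mu_t^{\Dx_l,M_l}-\mu_t^{\Dx_{l-1},M_l} - (\mu^{\Dx_l}_t-\mu^{\Dx_{l-1}}_t)}{G}$, which is the empirical mean (over the $M_l$ samples on level $l$) of the real random variable $G(u^{\Dx_l}_k(\omega;\cdot,t)) - G(u^{\Dx_{l-1}}_k(\omega;\cdot,t))$ minus its expectation. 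Crucially the same samples $\bar u_k$ are used for the two mesh resolutions inside a single level, but different (independent) samples are drawn across levels, so the $L+1$ terms are independent. Applying Theorem \ref{thm:mc} (more precisely the single-level $L^2(\Omega)$ computation in its proof, which is just the elementary Monte Carlo variance identity) to each term gives that the $L^2(\Omega)$-norm of the $l$-th term equals $\Var(G(u^{\Dx_l}(\cdot,\cdot,t))-G(u^{\Dx_{l-1}}(\cdot,\cdot,t)))^{1/2}/M_l^{1/2}$, and likewise for level $0$; then the triangle inequality in $L^2(\Omega)$ (one need not even invoke independence here, though it would give the sharper $\ell^2$ bound) yields the stated sum. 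I would note that every variance is finite because $G\in C_b(L^1(\D,\Ph))$ forces $|G(u^{\Dx_l})-G(u^{\Dx_{l-1}})|\leq 2\|G\|_{C_b}$.

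For the second assertion I would argue that $\Var(G(u^{\Dx_l})-G(u^{\Dx_{l-1}})) \leq \E\big[(G(u^{\Dx_l})-G(u^{\Dx_{l-1}}))^2\big]$ and show the right-hand side tends to $0$. The mechanism is: as $\Dx\to0$, $\NumericalEvolution{\Dx}(t)(\bar u)\to\Evolution(t)(\bar u)$ in $L^1(\D)$ for $\bar\mu$-a.e.\ $\bar u$ on which the scheme converges (and in any case $\NumericalEvolution{\Dx_l}(t)(\bar u)$ is Cauchy in $L^1$ along the dyadic sequence, since $\|\NumericalEvolution{\Dx_l}(t)(\bar u)-\NumericalEvolution{\Dx_{l-1}}(t)(\bar u)\|_{L^1}\leq \|\NumericalEvolution{\Dx_l}(t)(\bar u)-\Evolution(t)(\bar u)\|_{L^1}+\|\Evolution(t)(\bar u)-\NumericalEvolution{\Dx_{l-1}}(t)(\bar u)\|_{L^1}\to0$); hence by continuity of $G$ on $L^1(\D)$ we get $G(u^{\Dx_l})-G(u^{\Dx_{l-1}})\to0$ $\Prb$-almost surely. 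Since this sequence is uniformly bounded by $2\|G\|_{C_b(L^1)}$, the dominated convergence theorem gives $\E[(G(u^{\Dx_l})-G(u^{\Dx_{l-1}}))^2]\to0$, whence the variance vanishes.

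The main subtlety — and the step I expect to need the most care — is the almost-sure $L^1$-convergence $\NumericalEvolution{\Dx}(t)(\bar u)\to\Evolution(t)(\bar u)$: the theorem as stated assumes a convergence \emph{rate} only in its companion results (the Corollary and Theorem \ref{thm:fv}), not in the hypotheses of Theorem \ref{thm:mlmc} itself. The cleanest remedy is to invoke that monotone (or, more generally, consistent and $L^1$-contractive) finite volume schemes of the class introduced in Section \ref{sec:fvm} converge in $L^1_{\mathrm{loc}}$ to the entropy solution for every $\bar u\in L^1\cap L^\infty$, with no $\TV$ assumption, by the classical Crandall–Majda / compensated-compactness theory; since $\bar\mu(L^1(\D))=1$ this gives the needed pointwise (in $\omega$) convergence for $\bar\mu$-a.e.\ datum. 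One then has to be slightly careful that convergence is genuinely in $L^1(\D)$ (not merely $L^1_{\mathrm{loc}}$) — this follows from the $L^1$-contraction together with convergence of the initial projections in $L^1$, or one restricts $G$ to depend only on the solution through $L^1_{\mathrm{loc}}$ data. Modulo this identification of the hypotheses on the scheme, the rest is the routine telescoping-plus-dominated-convergence argument sketched above.
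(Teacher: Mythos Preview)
Your proposal is correct and follows essentially the same route as the paper: the telescoping identity $\mu^{\Dx_L}_t=\mu^{\Dx_0}_t+\sum_{l}(\mu^{\Dx_l}_t-\mu^{\Dx_{l-1}}_t)$, the triangle inequality in $L^2(\Omega)$, and the single-level Monte Carlo variance identity applied termwise. The paper's justification of the second assertion is terser than yours---it simply says the claim ``follows from the facts that $G$ is continuous and $(u^{\Dx})_{\Dx>0}$ forms a convergent sequence''---so your dominated-convergence argument with the bound $2\|G\|_{C_b}$, and your flagging of the pointwise $L^1$-convergence of the scheme as the only genuinely analytic input, supply detail the paper leaves implicit.
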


\begin{proof}
	We observe that
	\[\mu^{\Dx_L}=\mu^{\Dx_0}+\sum_{l=1}^L\left(\mu^{\Dx_l}-\mu^{\Dx_{l-1}}\right).\]
	Therefore, we get
	\begin{align*}
	\big\|\Ypair{\mu^{\vec{\Dx},\vec{M}}_t-\mu^{\Dx_L}_t}{G}\big\|_{L^2(\Omega)}&\leq \big\|\Ypair{\mu^{\Dx_0,M_0}_t-\mu^{\Dx_0}_t}{G}\big\|_{L^2(\Omega)}\\
	&\quad +\sum_{l=1}^L\Bigl\|\Ypair{\mu^{\Dx_l,M_l}_t-\mu^{\Dx_{l-1},M_l}_t}{G} - \Ypair{\mu^{\Dx_l}_t-\mu^{\Dx_{l-1}}_t}{G}\Bigr\|_{L^2(\Omega)}.
	\end{align*}
	We can easily bound the first term using \Cref{thm:mc} to get
	\[\big\|\Ypair{\mu^{\Dx,M_0}_t-\mu^{\Dx_0}_t}{G}\big\|_{L^2(\Omega)}\leq C_GM_0^{-1/2}.\]
	For the second term, we observe that
	\begin{multline*}
	\Bigl\|\Ypair{\mu^{\Dx_l,M_l}_t-\mu^{\Dx_{l-1},M_l}_t}{G} - \Ypair{\mu^{\Dx_l}_t-\mu^{\Dx_{l-1}}_t}{G}\Bigr\|_{L^2(\Omega)} \\
	=\left\|\frac{1}{M_l}\sum_{k=1}^{M_l}\left(G(u_k^{\Dx_l}(\omega;\cdot, t))-G(u_k^{\Dx_{l-1}}(\omega;\cdot, t))\right)-\int_{\Omega}\big(G(u^{\Dx_l}(\omega';\cdot, t))-G(u^{\Dx_{l-1}}(\omega';\cdot, t))\big)\dd \Prb(\omega')\right\|_{L^2(\Omega)}.
	\end{multline*}
	Using a standard Monte Carlo estimate, we get
	\[
	\Bigl\|\Ypair{\mu^{\Dx_l,M_l}_t-\mu^{\Dx_{l-1},M_l}_t}{G} - \Ypair{\mu^{\Dx_l}_t-\mu^{\Dx_{l-1}}_t}{G}\Bigr\|_{L^2(\Omega)} \leq \frac{\Var\left(G(u^{\Dx_l}(\omega;\cdot, t))-G(u^{\Dx_{l-1}}(\omega;\cdot, t))\right)}{M_l^{1/2}}. \]
	Summing over $l$ and adding the first estimate yields the first statement. The last statement follows from the facts that $G$ is continuous and $(u^{\Dx})_{\Dx>0}$ forms a convergent sequence as $\Dx\to 0$.
\end{proof}
\begin{remark}
	For general $G\in C_b(L^1(\D, \Ph))$, we can only guarantee that

	\begin{equation}
		\label{eq:variance_decay_simple}
		\Var\left(G(u^{\Dx_l}(\omega;\cdot, t))-G(u^{\Dx_{l-1}}(\omega;\cdot, t))\right)\to 0.
	\end{equation}
	However, for practical computations, $G$ is known, and we can get a much better estimate on the decay of \eqref{eq:variance_decay_simple}.
\end{remark}

\subsection{Work analysis of MLMC}
In this section we extend the analysis of \Cref{subsec:work_analysis_mc} to the MLMC algorithm. In the computation of the MLMC approximation $\mu^{\vec{\Dx},\vec{M}}$, we compute $M_l$ finite volume simulations with resolution $\Dx_l$ for each $l=0,\dots,L$, and $M_l$ finite volume simulations with resolution $\Dx_{l-1}$ for $l=1,\dots,L$. If $M_l\leq M_{l-1}$ then the latter can be neglected, and we find that the work performed by the MLMC algorithm is
\begin{equation}
	\label{eq:work_mlmc}
	\begin{aligned}
		\WorkMLMC{\vec{\Dx},\vec{M}} & = \sum_{l=0}^LM_l\WorkFVMCFL{\Dx_l}            \\
		                   & =\sum_{l=0}^L\bigO\big(M_l(\Dx_l^{-d-1})\big)          \\
		                   & =\sum_{l=0}^L\bigO\big(M_l2^{l(d+1)}\Dx_0^{-d-1}\big).
	\end{aligned}
\end{equation}
The number of samples per level, $M_l$, has so far been unspecified. It is common to optimize the number of samples for a given convergence rate. We handle the general case, and optimize with respect to the number of samples, where the variance across the levels is abstractly given as
\begin{equation}
	\label{eq:variance_v_l}
	\Var(G(u^{\Dx_l})-G(u^{\Dx_{l-1}}))= V_l.
\end{equation}

\begin{theorem}
Assume 
\[V_l=\bigO(\Dx_l^r)\]
for some positive $r$. Choose the number of samples per level as 
\[M_l=2^{r(L-l)}\Dx_L^{r/2-s},\qquad l=1,\ldots, L,\]
and
\[M_0=\frac{1}{\Dx_L^{2s}}.\]
Then 
\begin{equation}
\label{eq:work_mlmc_optimal}
\WorkMLMC{\vec{\Dx},\vec{M}}=2^{-L(d+1)}\Dx_L^{-d-1-2s}+\Dx_L^{-d-1+r/2-s}L.
\end{equation}
In particular for all $r>0$
\[\WorkMLMC{\vec{\Dx},\vec{M}}\leq \WorkMC{\Dx_L}{\Dx_L^{-2s}},\]
and when $r\geq 2s$, we have
\[\WorkMLMC{\vec{\Dx},\vec{M}}=\WorkFVMCFL{\Dx_L}.\]
\end{theorem}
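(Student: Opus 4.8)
The plan is to verify the work formula \eqref{eq:work_mlmc_optimal} by substituting the prescribed sample counts $M_l$ into the general work expression \eqref{eq:work_mlmc}, and then to derive the two comparison statements by elementary estimates on the two resulting terms. First I would plug $M_0 = \Dx_L^{-2s}$ into the $l=0$ term of \eqref{eq:work_mlmc}, giving $M_0 \cdot 2^{0}\Dx_0^{-d-1} = \Dx_0^{-d-1}\Dx_L^{-2s}$; since $\Dx_L = 2^{-L}\Dx_0$ we have $\Dx_0^{-d-1} = 2^{-L(d+1)}\Dx_L^{-d-1}$, which produces the first term $2^{-L(d+1)}\Dx_L^{-d-1-2s}$. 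For the levels $l=1,\dots,L$, substitute $M_l = 2^{r(L-l)}\Dx_L^{r/2-s}$ into $M_l 2^{l(d+1)}\Dx_0^{-d-1}$ and again use $\Dx_0^{-d-1} = 2^{-L(d+1)}\Dx_L^{-d-1}$, so the $l$-th summand becomes $2^{r(L-l)}2^{(l-L)(d+1)}\Dx_L^{-d-1+r/2-s}$. The key point here is that the geometric factor $2^{(r - (d+1))(L-l)}$, summed over $l=1,\dots,L$, is $\bigO(L)$ uniformly — either because $r \le d+1$ (summable decreasing) or because it is a geometric series whose largest term is $2^{(r-(d+1))(L-1)} \le 2^{r(L-1)}$, but in the regime of interest the bound $\bigO(L)$ on the whole sum is what \eqref{eq:work_mlmc_optimal} records; I would just bound the sum by $L$ times $\Dx_L^{-d-1+r/2-s}$ to land the second term. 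This gives \eqref{eq:work_mlmc_optimal} up to the implied constants.

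For the comparison $\WorkMLMC{\vec{\Dx},\vec{M}} \leq \WorkMC{\Dx_L}{\Dx_L^{-2s}}$, recall $\WorkMC{\Dx_L}{\Dx_L^{-2s}} = \bigO(\Dx_L^{-d-1-2s})$ from \eqref{eq:work_mc}. The first term of \eqref{eq:work_mlmc_optimal} is $2^{-L(d+1)}\Dx_L^{-d-1-2s} \le \Dx_L^{-d-1-2s}$, which is already dominated. For the second term $\Dx_L^{-d-1+r/2-s}L$, I would argue that since $r>0$ and (implicitly) $s>0$, we have $r/2 - s \ge r/2 - s$ compared against $-2s$: we need $-d-1+r/2-s + \log_2 L / L \cdot (\text{negligible}) \le -d-1-2s$ asymptotically, i.e. $r/2 + s \le 0$ would be false, so the honest statement is that $\Dx_L^{-d-1+r/2-s}L = o(\Dx_L^{-d-1-2s})$ provided $r/2 - s > -2s$, i.e. $r/2 + s > 0$, which always holds; the extra factor $L = \bigO(\log \Dx_L^{-1})$ is absorbed since $\Dx_L^{-\epsilon}$ beats $\log\Dx_L^{-1}$ for any $\epsilon>0$, and here $\epsilon = s + r/2 > 0$. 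Hence the MLMC work is asymptotically no larger than the MC work, giving the inequality for small enough $\Dx_L$.

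Finally, when $r \ge 2s$, the second term's exponent satisfies $-d-1+r/2-s \ge -d-1$, wait — we want it $\le -d-1$, so in fact $r/2 - s \le 0$ requires $r \le 2s$; I suspect the intended regime is that the second term is then dominated by $\WorkFVMCFL{\Dx_L} = \bigO(\Dx_L^{-d-1})$ precisely when $r/2 - s \ge 0$, i.e. $r \ge 2s$, since then $\Dx_L^{-d-1+r/2-s} \le \Dx_L^{-d-1}$ up to the $L$ factor, and the first term $2^{-L(d+1)}\Dx_L^{-d-1-2s}$ is also $\bigO(\Dx_L^{-d-1})$ because $2^{-L(d+1)} = (\Dx_L/\Dx_0)^{d+1}$ contributes $\Dx_L^{d+1}$, turning $\Dx_L^{-d-1-2s}$ into $\Dx_0^{-(d+1)}\Dx_L^{-2s}$ — which needs $s=0$ or a careful reading; so the main obstacle is pinning down exactly which exponents and which $L$-factors the authors are suppressing in the $\bigO$/$\simeq$ notation, and matching the regime $r \ge 2s$ to the claimed equality $\WorkMLMC{} = \WorkFVMCFL{\Dx_L}$. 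I would resolve this by carefully tracking whether $\Dx_0$ is treated as a fixed $\bigO(1)$ constant (which makes $2^{-L(d+1)}\Dx_L^{-d-1-2s} = \bigO(\Dx_L^{-2s})$) and then observing that for $r \ge 2s$ both terms of \eqref{eq:work_mlmc_optimal} are at most a log factor times $\Dx_L^{-d-1}$, i.e. of the same order as a single finest-level finite volume solve, which is the asserted conclusion.
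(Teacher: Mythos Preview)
Your approach to deriving \eqref{eq:work_mlmc_optimal} is essentially identical to the paper's: substitute the prescribed $M_l$ into the work sum \eqref{eq:work_mlmc}, separate off the $l=0$ term, and bound the remaining geometric sum $\sum_{l=1}^{L}2^{(r-(d+1))(L-l)}$ by $L$ times its largest term. (Both you and the paper tacitly use $r\leq d+1$ here so that the largest summand is the one at $l=L$, which equals $1$.)

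Where your proposal goes further is in trying to justify the two comparison claims. The paper's proof stops immediately after \eqref{eq:work_mlmc_optimal} and does not argue either inequality; it simply records them as consequences. Your argument for $\WorkMLMC{\vec{\Dx},\vec{M}}\leq\WorkMC{\Dx_L}{\Dx_L^{-2s}}$ is correct: the first term of \eqref{eq:work_mlmc_optimal} is trivially dominated, and for the second you compare exponents, $-d-1+r/2-s$ versus $-d-1-2s$, and absorb the factor $L=\bigO(\log\Dx_L^{-1})$ into the gap $r/2+s>0$. For the $r\geq 2s$ claim you initially flipped the inequality (``$r/2-s\leq 0$ requires $r\leq 2s$'') before self-correcting; the right reading is that $r\geq 2s$ gives $\Dx_L^{r/2-s}\leq 1$, so the second term is at most $L\,\Dx_L^{-d-1}$, and the first term equals $\Dx_0^{-(d+1)}\Dx_L^{-2s}$, which is $\bigO(\Dx_L^{-d-1})$ once $\Dx_0$ is treated as fixed and $2s\leq d+1$. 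Your instinct that the equalities are meant modulo fixed constants and log factors is exactly how the paper is using its $\bigO$ notation throughout this section.
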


\begin{proof}
We compute
\begin{align*}
\WorkMLMC{\vec{\Dx},\vec{M}}&=\sum_{l=0}^{L}\Dx_l^{-d-1}M_l\\
&=\sum_{l=0}^{L}(2^{L-l}\Dx_L)^{-d-1}M_l\\
&=(2^{L}\Dx_L)^{-d-1}M_0+\sum_{l=1}^{L}(2^{L-l}\Dx_L)^{-d-1}M_l\\
&=2^{-L(d+1)}\Dx_L^{-d-1-2s}+\sum_{l=1}^{L}(2^{-d-1})^{L-l}\Dx_L^{-d-1}2^{r(L-l)}\Dx_L^{r/2-s}\\
&=2^{-L(d+1)}\Dx_L^{-d-1-2s}+(2^{-d-1+r})^L\Dx_L^{-d-1+r/2-s}\sum_{l=1}^{L}(2^{d+1-r})^{l}\\
&\leq 2^{-L(d+1)}\Dx_L^{-d-1-2s}+(2^{-d-1+r})^L\Dx_L^{-d-1+r/2-s}L(2^{d+1-r})^{L}\\
&= 2^{-L(d+1)}\Dx_L^{-d-1-2s}+\Dx_L^{-d-1+r/2-s}L,
\end{align*}
which is what we wanted.
\end{proof}

\begin{remark}
It is possible to obtain a similar work estimate by minimizing the work for a given error and using a Lagrange multiplier technique \cite{giles}.
\end{remark}

\section{Numerical Examples}\label{sec:numex}
\newcommand{\intvar}{u}
In this section, we will test the Monte Carlo and the multi-level Monte Carlo on a suite of numerical experiments. Our model equation for scalar conservation laws is the one-dimensional Burgers equation
\begin{equation}
	\label{eq:burgers}
	\begin{aligned}
		u_t+\Big(\frac{u^2}{2}\Big)_x & =0      & \qquad \text{for } x\in D:=[0,1],\ t\leq T \\
		u(x,0)                        & =\bar{u}(x) & \qquad\text{for } x\in D
	\end{aligned}
\end{equation}
with suitable boundary conditions. 

We will compute statistical quantities of interest for the approximated solution. These include one-point statistics such as the mean and the variance. We will also compute
a two point \emph{local structure function} of the solution, which is given by
\begin{equation}\label{eq:strucfunclocal}
S_p(x, t; h) := \int_{\R^2}|\intvar_1-\intvar_2|^p\dd \nu^2_{t,x, x+h}(\intvar_1, \intvar_2)\qquad x\in D, h\geq 0
\end{equation}
for some $p\geq 1$. 

We also define the integrated structure functions as
\begin{equation}\label{eq:strucfunc}
\bar{S}_p(t;h)=\int_D S_p(x, t; h)\;dx = \int_{L^1(D)}\int_{D}|u(x+h)-u(x)|^p\;dx\;d\mu_t(u).
\end{equation}
We assume that the initial measure has integrable $L^p$-norms,
\[
\int_{L^1(D)} \|\bar{u}\|_{L^p(\D)}^p\dd \bar{\mu}(\bar{u})<\infty.
\]
By Minkowski's inequality, this assumption implies that the structure function $\bar{S}_p$ is finite, and we may therefore approximate it using the Monte Carlo algorithm. We let $S_p^{\Dx, M}(x,t;h)$, $\bar{S}_p^{\Dx, M}(t;h)$ and $S_p^{\vec{\Dx},\vec{M}}(x,t;h)$, $\bar{S}_p^{\vec{\Dx},\vec{M}}(t;h)$ denote the local and integrated structure functions for the single- and multi-level Monte Carlo methods, respectively. 

We define a perturbed version of the three-point moment by setting

\begin{equation}
\label{eq:3pmom}
M_p(x,t;h_1,h_2) :=\int_{\R^3} (\intvar_1-\intvar_2)(\intvar_1-\intvar_3)^2\dd \nu^3_{t,x,x+h_1,x+h_2}(\intvar_1,\intvar_2,\intvar_3).
\end{equation}
This three-point moment is also bounded assuming e.g.
\[
\int_{L^1(D)} \|\bar{u}\|_{L^3(D)}\dd \bar{\mu}(\bar{u})<\infty.
\]

\subsection{Uncertain shock location}

We consider Burgers' equation with the initial data
\begin{equation}
	\label{eq:shock}
	\bar{u}(\omega, x) = \begin{cases}1 & x<X(\omega) + \frac{1}{2} \\
		0 & \text{otherwise}.\end{cases}\qquad x\in[0,1].
\end{equation}
Here $X$ is a a uniformly distributed random variable on $[-\frac{1}{10},\frac{1}{10}]$. We can explicitly compute the mean and the variance in this case. Given the one-dimensional stochastic space for this scalar problem, the structure function \eqref{eq:strucfunclocal} can also be explicitly computed,
\[S_p(x, t;h) = \begin{cases}0                                                                                              & x+\frac{1}{10}+h<\frac{1}{2}(1+t) \\
		0                                                                                              & x-\frac{1}{10}+h>\frac{1}{2}(1+t) \\
		5\left(\min(\frac{1}{10},x+h-\frac{1}{2}(1+t))-\max(-\frac{1}{10},x-\frac{1}{2}(1+t))) \right) & \text{otherwise.}\end{cases}
\]

We use the Monte Carlo and Multilevel Monte Carlo algorithm to approximate different statistical quantities of interest. For the Monte Carlo simulations we set the number of samples equal to the number of finite volume cells, while for the MLMC simulations we set
\[
M_l=\begin{cases}16\cdot 2^{L-l} & \text{if }l=1,\ldots,L \\
		1/\Dx^L         & \text{if } l=0
	\end{cases}
\]
For the spatial discretization we use a finite volume solver, based on the Godunov flux with piecewise linear WENO reconstruction and an SSP Runge--Kutta method of second order. We set the CFL constant to be $0.475$. 

We plot the point-wise mean and variance in \Cref{fig:mean_var_shock}, and the corresponding convergence histories in \Cref{fig:conv_mean_var_shock}. The plots show that both the MC and MLMC methods approximate the mean and variance quite accurately. The convergence plots also show that the MLMC method is (an order of magnitude) more efficient when compared to the MC method. 

For a comparison between the numerical and analytically computed structure functions, see \Cref{fig:shock_comparison}. In addition, we perform a convergence study with respect to the analytical solution in \Cref{fig:shock2ptconv}. As is clear from \Cref{fig:shock2ptconv}, the work required to achieve a given error in the Monte Carlo simulation is significantly greater than the computational work required with MLMC method. In fact, the gain in efficiency is a couple of orders of magnitude. Moreover, in \Cref{fig:shock2ptconv} (right), we plot the error vs. work for the Monte Carlo and MLMC algorithms in computing the third moment \eqref{eq:3pmom}. Again the gain in efficiency with the MLMC method is significant. 

To confirm the assertion of \Cref{thm:fv},  we measure the Wasserstein distance between the generated sequence and the exact solution in \Cref{fig:wasserstein_shock}. Since the Monte Carlo ensemble is a stochastic quantity, the measured error will also be stochastic. We therefore run $10$ experiments for each mesh resolution and measure the average error. As is clear from the figure, the convergence rate of the mean of the error is close to $1/2$, which agrees well with the Monte Carlo error in \eqref{eq:weak_conv_mc}. However, as we can see, the variance of the error is rather large relative to the error.

Because the initial data \eqref{eq:shock} only depends on a one-dimensional parameter, we can replace the stochastic Monte Carlo integration rule with a deterministic midpoint integration rule. \Cref{fig:wasserstein_shock_midpoint} clearly shows a convergence rate of $1$ for the deterministic integration rule.

\begin{figure}
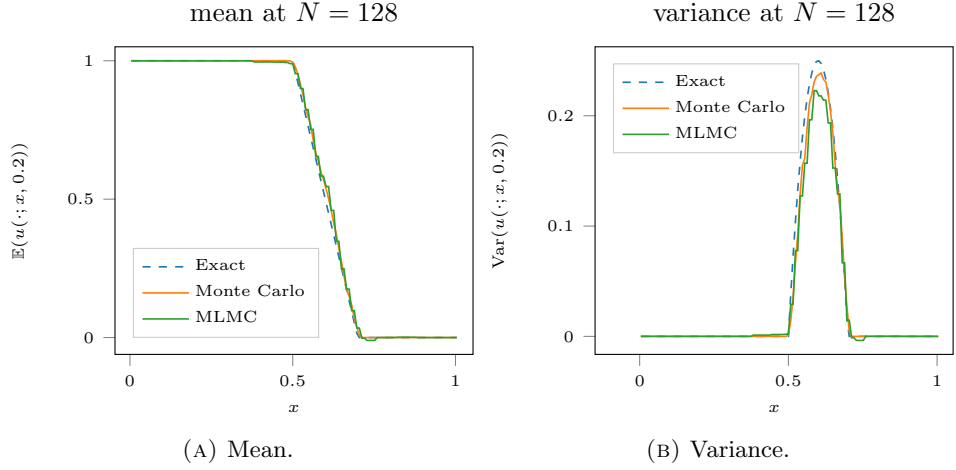

	\centering
	\begin{subfigure}[b]{0.49\textwidth}
		\InputImage{\textwidth}{0.9\textwidth}{shock_location_mean_128}
		\caption{Mean.}
	\end{subfigure}
	\begin{subfigure}[b]{0.49\textwidth}
		\InputImage{\textwidth}{0.9\textwidth}{shock_location_variance_128}
		\caption{Variance.}
	\end{subfigure}
	\caption{Mean and variance for Burgers' equation and initial data given in \eqref{eq:shock}.\label{fig:mean_var_shock}}
\end{figure}

\begin{figure}
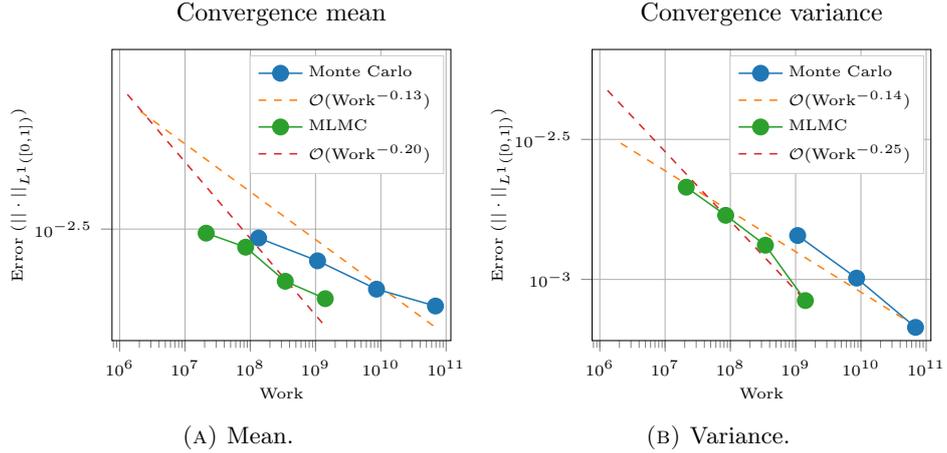

	\centering
	\begin{subfigure}[b]{0.49\textwidth}
		\InputImage{\textwidth}{0.9\textwidth}{shock_location_convergence_mean}
	\caption{Mean.}
\end{subfigure}
\begin{subfigure}[b]{0.49\textwidth}
	\InputImage{\textwidth}{0.9\textwidth}{shock_location_convergence_variance}
	\caption{Variance.}
\end{subfigure}
\caption{Convergence for mean and variance for Burgers' equation and initial data given in \eqref{eq:shock}.\label{fig:conv_mean_var_shock}}
\end{figure}

\begin{figure}
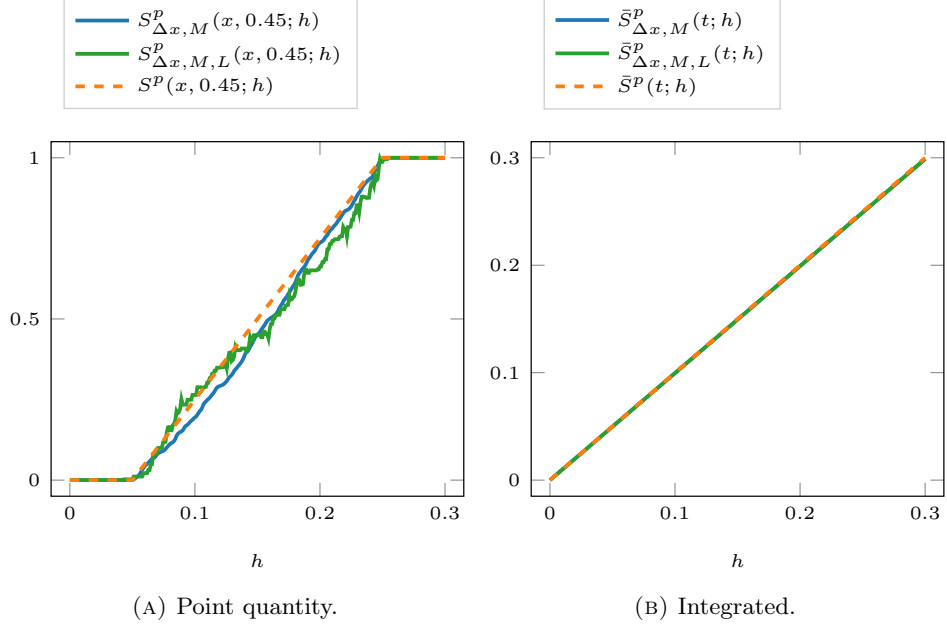

	\centering
	\begin{subfigure}[b]{0.49\textwidth}
		\InputImage{\textwidth}{0.9\textwidth}{example_1024_shock}
		\caption{Point quantity.}
	\end{subfigure}
	\begin{subfigure}[b]{0.49\textwidth}
		\InputImage{\textwidth}{0.9\textwidth}{comparison_avg_1024_shock}
		\caption{Integrated.}
	\end{subfigure}
	\caption{Comparison of two point structure function numerically computed and analytic solution for Burgers' equation and initial data given in \eqref{eq:shock}. Here $\Dx=1/1024$ and $M=1024$.}
	\label{fig:shock_comparison}
\end{figure}


\begin{figure}
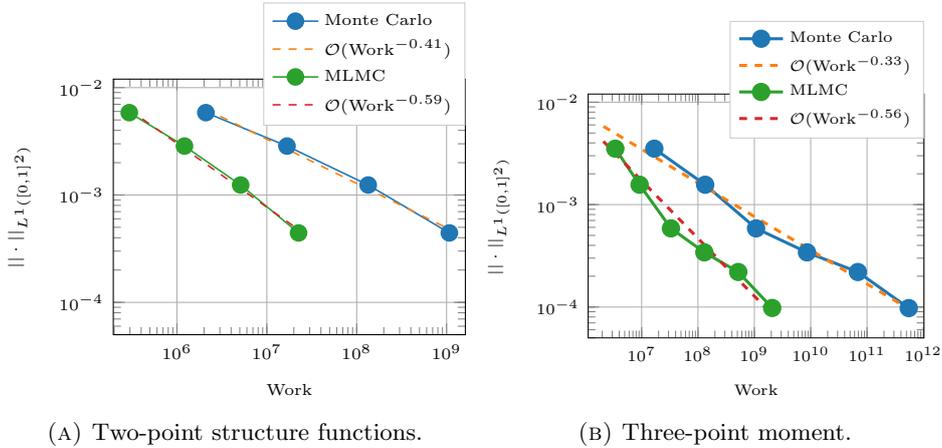

	\centering
	\begin{subfigure}[b]{0.49\textwidth}
		\InputImage{\textwidth}{0.8\textwidth}{shock_location_2pt_convergence}
		\caption{Two-point structure functions.}
	\end{subfigure}
	\begin{subfigure}[b]{0.49\textwidth}
		\InputImage{\textwidth}{0.8\textwidth}{runtime_conv_shock_mlmc_mc}
		\caption{Three-point moment.}
	\end{subfigure}
	\caption{Convergence errors of the two-point structure function \eqref{eq:strucfunc} and three-point moment \eqref{eq:3pmom} with initial data given as the uncertain shock location \eqref{eq:shock}.}
	\label{fig:shock2ptconv}
\end{figure}

\begin{figure}
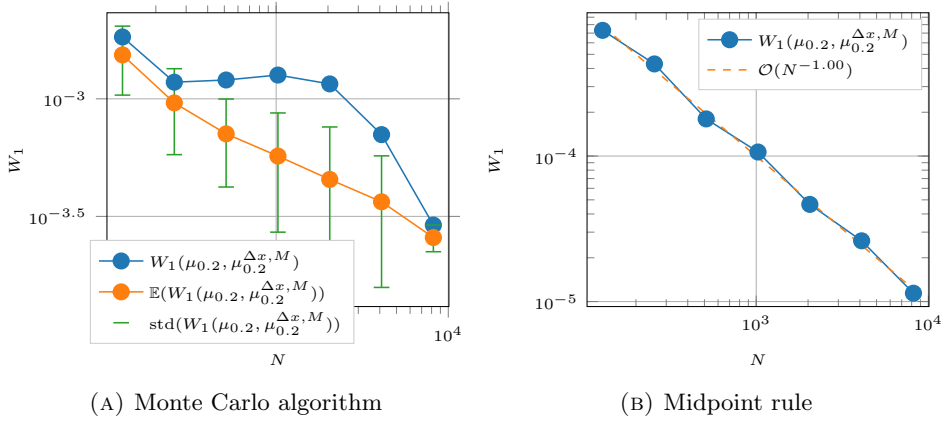

	\begin{subfigure}[b]{0.49\textwidth}
		\InputImage{\textwidth}{0.9\textwidth}{wasserstein_shock}
		\caption{Monte Carlo algorithm}
		\label{fig:wasserstein_shock}
	\end{subfigure}
	\begin{subfigure}[b]{0.49\textwidth}
		\InputImage{\textwidth}{0.9\textwidth}{wasserstein_shock_midpoint}
		\caption{Midpoint rule}
		\label{fig:wasserstein_shock_midpoint}
	\end{subfigure}
	\caption{Wasserstein convergence for the initial data given in \eqref{eq:shock} at $T=0.2$.}
\end{figure}

\subsection{Fractional Brownian motion}
As is standard in the \emph{burgulence} literature \cite{fris1,fris2}, we perform experiments for Burgers' equation \eqref{eq:burgers} with the initial data set to be \emph{fractional Brownian motion}. Introduced by Mandelbrot et al.~\cite{mandelbrot_fractional}, fractional Brownian motion can be seen as a generalization of standard Brownian motion with a scaling exponent different than $1/2$. We set
\[
u^H_0(\omega; x):=B^H(\omega; x)\qquad \omega\in\Omega,\ x\in [0,1],
\]
where $B^H$ is fractional Brownian motion with Hurst exponent $H\in(0,1)$. Brownian motion corresponds to a Hurst exponent of $H=1/2$. 

To generate fractional Brownian motion, we use the random midpoint displacement method originally introduced by L\'evy~\cite{Levy1965} for Brownian motion, and later adapted for fractional Brownian motion~\cite{Fournier:1982:CRS:358523.358553,Voss1991}. Consider a uniform partition $0=x_\hf<\dots<x_{N+\hf}=1$ with $x_\iphf - x_\imhf \equiv \Dx$, where $N=2^k+1$ is the number of cells for some $k\in\N$. We first fix the endpoints
\[u^{H, \Delta x}_{1}(\omega; 0) =0\qquad u^{H, \Delta x}_{N}(\omega;0)=X_0(\omega), \]
where $(X_k)_{k\in\N}$ is a collection of normally distributed random variables with mean 0 and variance 1. Recursively, we set
\[u^{H, \Delta x}_{2^{k-l-1}(2j+1)}(\omega; 0)=\frac{1}{2}\left (u^{H, \Delta x}_{2^{k-l}(j+1)}(\omega; 0)+u^{H, \Delta x}_{2^{k-l}j}(\omega; 0)\right)+\sqrt{\frac{1-2^{2H-2}}{2^{2lH}}}X_{2^l+j}(\omega)\]
for $l=0,\ldots,k$ and for $j=0,\ldots,2^l$. That is, we bisect every interval and set the middle value to the average of the neighbouring values plus some Gaussian random variable. See \Cref{fig:fractional_sample} (left) for an example with $H=0.01$.

We run experiments for the standard Brownian motion ($H=0.5$) and an even rougher (pathwise) initial datum, corresponding to $H=0.01$ . In both cases, one can easily check that the initial total variation ${\rm TV}(\bar{u}^H(\omega,\cdot))$ is in fact infinite for almost all $\omega$, and hence the Kuznetszov error estimate \cite{KUZNETSOV1976105} does not apply. 

For the Monte Carlo simulations we set the number of samples equal to the number of finite volume cells, while for the MLMC simulations we set
\[M_l=\begin{cases}16\cdot 2^{L-l} & \text{if }l=1,\ldots,L \\
		1/\Dx^L         & \text{if } l=0
	\end{cases}
\]
For the Finite Volume Method, we use the Godunov flux with WENO2 reconstruction and an SSP Runge--Kutta method of second order. We set the CFL constant to be $0.475$.

We measure the decay of $V_l$ defined in \eqref{eq:variance_v_l} numerically for $H=0.01$ and $H=0.5$ and see that $V_l$ as a function of the mesh width $h$ behaves approximately as $\bigO(h)$ (results not displayed here). This happens in spite of the fact that the Kuznetszov error estimate does not apply.

Our aim is to numerically approximate structure functions. Following \cite{fris2}, one intuitively argues that if the (pathwise) solution consists of a disjoint set of shocks, well-separated by rarefaction waves for almost all times, then the exponent of the structure function \eqref{eq:strucfunc} will be dominated by the behavior at shocks and the structure functions will scale as $\mathcal{O}(h)$ for any $1 \leq p < \infty$. Given the rigorous results of Sinai \cite{sinai}, one expects this scaling to hold in the case of standard Brownian motion initial data. 

In \Cref{fig:fractional_scaling} we approximate the scaling exponents of $\bar{S}_p$ for different values of $H$. For $H=0.5$,  the structure functions scale close to $\mathcal{O}(h)$ for $p=1,2,3$, which agrees with the results of \cite{fris1,sinai}. Similarly for $H=0.01$, even though the initial data is much rougher than standard Brownian motion, we observe from \Cref{fig:fractional_sample}, that the initally highly oscillatory pathwise solution very quickly evolves into a set of well-separated shocks separated by rarefaction waves. As argued before, one expects a scaling exponent of approximately $\mathcal{O}(h)$ for all $p$. This is indeed verified in the approximate structure functions for $p=1,2,3$, shown in \Cref{fig:fractional_scaling} (right). 

We also measure the numerical convergence rate of the Monte Carlo and Multilevel Monte Carlo algorithm against a reference solution computed with $\Delta x = 1/4096$ and $M=4096$. The results are shown in \Cref{fig:fractional_0.01_convergence}. As is expected, the MLMC algorithm outperforms the single-level Monte Carlo algorithm. Indeed, the error versus work asymptotic of a MLMC simulation is equal to that of the runtime asymptotics for a single sample.

This example clearly illustrates the ability of both the MC and MLMC methods to compute statistical quantities in a realistic and well-studied problems and demonstrates the considerable gain in efficiency for the MLMC method over the MC method.

\subsection{Cubic conservation law with Brownian initial data}
We redo the experiments in the previous section, but with the cubic conservation law,
\begin{equation}
\label{eq:cubic}
\begin{aligned}
u_t+\Big(\frac{u^3}{3}\Big)_x & =0      & \qquad \text{for } x\in D:=[0,1],\ t\leq T \\
u(x,0)                        & =\bar{u}(x) & \qquad\text{for } x\in D.
\end{aligned}
\end{equation}
In this case, it is not possible to derive explicit expressions or asymptotices for the structure functions as in the case of Burgers' equation \cite{sinai}, as no explicit Hopf--Lax-type formulas are available for a non-convex flux function. Therefore, numerical simulations are the main tool in calculating statistical quantities of interest.  We approximate the structure functions \eqref{eq:cubic} with initial data given as Brownian motion, in other words, we set $u_0(x,\omega)=B^{1/2}_x(\omega)$. The results are shown in \Cref{fig:cubic_scaling}. As is clear from the figure, we get the same linear scaling of the structure functions for any $1 \leq p < \infty$, indicating that the initially highly oscillatory solution breaks down into shocks, well separated by rarefactions, as in the case of Burgers' equation.

\begin{figure}
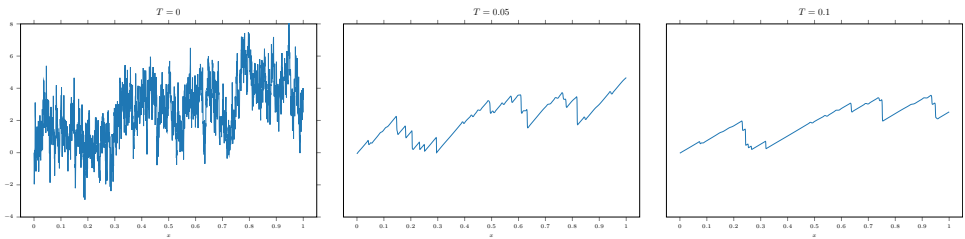


\InputImage{\textwidth}{0.7\textwidth}{fractional_0_01_2048}

	\caption{Single sample simulation results for the Burgers' equation with initial data given as fractional Brownian motion with $H=0.01$. In this example, $\Dx=1/1024$.}
	\label{fig:fractional_sample}
\end{figure}

\begin{figure}
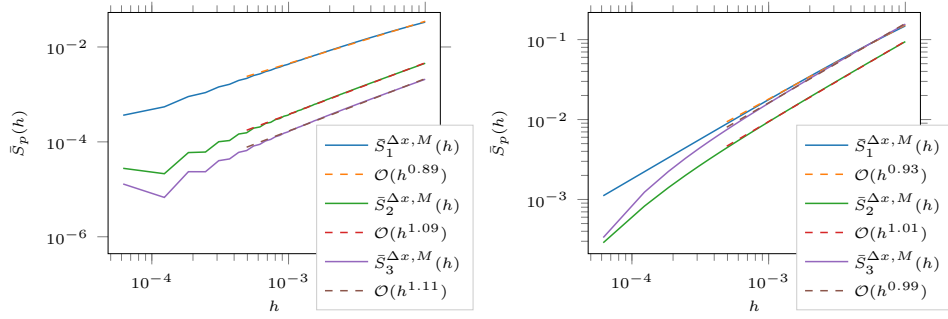

	\begin{subfigure}[b]{0.49\textwidth}
		\InputImage{\textwidth}{0.8\textwidth}{brownian0_01_scalings}
		\caption{Fractional Brownian motion with $H=0.5$}
		\label{fig:fractional_0.5_scaling}
	\end{subfigure}
	\begin{subfigure}[b]{0.49\textwidth}
		\InputImage{\textwidth}{0.8\textwidth}{fractional_0_01_0_01_scalings}
		\caption{Fractional Brownian motion with $H=0.01$}
		\label{fig:fractional_0.01_scaling}
	\end{subfigure}	
	\caption{Structure functions for Burgers' equation with initial data given as fractional Brownian motion. In this example, $\Dx=1/16384$ and $M=16384$.}
	\label{fig:fractional_scaling}
\end{figure}

%

\begin{figure}
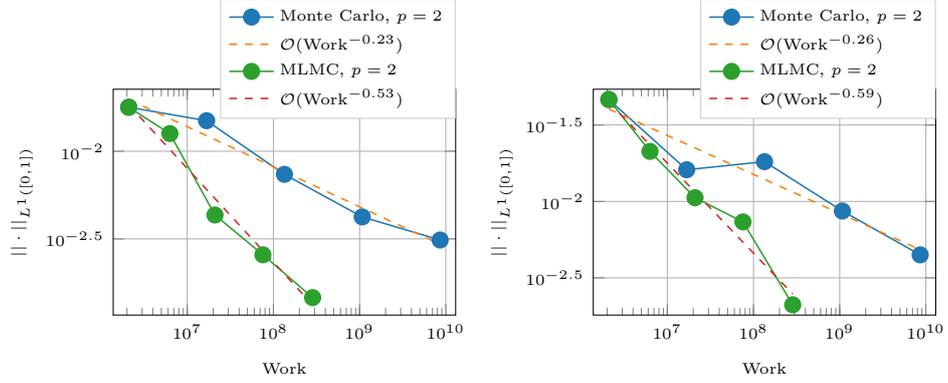

	\begin{subfigure}{0.49\textwidth}
			\centering
	\InputImage{\textwidth}{0.75\textwidth}{brownian2_convergence}
		\caption{Numerical error for the two point structure function against a reference solution for Burgers' equation with Brownian motion as initial data.}
	\label{fig:brownian_convergence}
\end{subfigure}
\begin{subfigure}{0.49\textwidth}
		\centering
	\InputImage{\textwidth}{0.75\textwidth}{fractional_0_01_2_convergence}
	\caption{Numerical error for the two point structure function against a reference solution for Burgers' equation with fractional Brownian motion with $H=0.01$ as initial data.}
	\label{fig:fractional_0.01_convergence}
\end{subfigure}
\caption{Error vs.~work for the Monte Carlo and MLMC methods for Burgers' equation with fractional Brownian motion initial data.}
\end{figure}


%


\begin{figure}
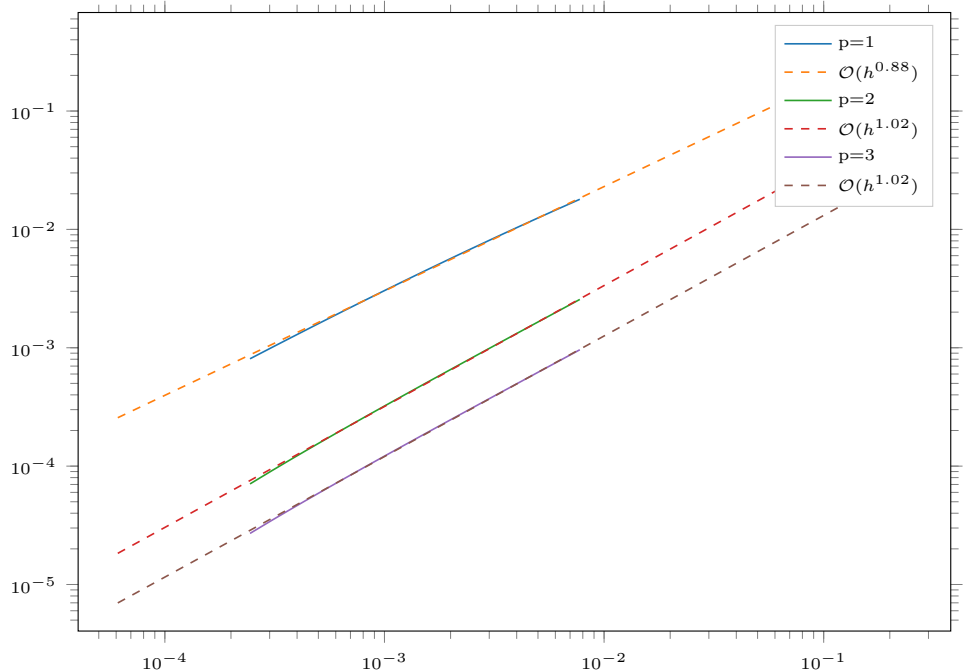

	\centering
	\InputImage{\textwidth}{0.75\textwidth}{cubic_scaling}
	\caption{Structure function scaling for a cubic conservation law ($f(u)=1/3u^3$).}
	\label{fig:cubic_scaling}
\end{figure}


\section{Conclusion}
\label{sec:conc}
We consider statistical solutions for scalar conservation laws. Statistical solutions are time-parametrized probability measures on $L^1(\R^d)$, whose time evolution is specified in terms of an infinite family of PDEs for the corresponding correlation measures. Statistical solutions provide a framework for \emph{parametrization free} uncertainty quantification in conservation laws with random initial data and their well-posedness was established in \cite{FLM17}.  

We design an efficient algorithm for the numerical approximation of statistical solutions of scalar conservation laws. This algorithm is based on standard finite volume (difference) methods for the spatio-temporal discretization and (multi-level) Monte Carlo methods for discretizing the probability space. 

We prove that both the Monte Carlo and the multi-level Monte Carlo based algorithms converge to the \emph{entropy statistical solutions} of scalar conservation laws, in the Wasserstein metric on probability measures on integrable functions, as the mesh is refined and the number of samples increased. We also present a complexity analysis and prove that there is a considerable gain in computational efficiency of the multi-level Monte Carlo method over the Monte Carlo method.

We present a set of numerical experiments for Burgers' equation to illustrate the ability of the both the Monte Carlo and the multi-level Monte Carlo algorithms to approximate the statistical solution accurately and to demonstrate the gain in efficiency resulting from the MLMC method. These experiments involve rough random initial conditions such as fractional Brownian motion (in space), and the computed results are consistent with those published in the burgulence literature \cite{fris1,fris2}. We also compute statistical quantities of interest for the cubic conservation law. In this case, it is not possible to obtain analytical formulas for structure functions. However, numerical results show that the structure functions scale as in the case of Burgers' equation with rough random initial conditions. 

The convergence of our numerical methods in the scalar case was underpinned by the presence of (pathwise) convergent numerical methods for the underlying deterministic problem. Such convergence results are not available for systems of conservation laws. Consequently, the design of convergent numerical approximations for systems of conservation laws is very challenging and is addressed in a forthcoming paper.

\section*{Acknowledgements}
USF was supported in part by the grant \textit{Waves and Nonlinear Phenomena} (WaNP) from the Research Council of Norway. Parts of this research was conducted while USF was visiting the Seminar for Applied Mathematics and he would like to thank the hosts for their warm hospitality.

SM was partially supported by ERC STG NN. 306279, SPARCCLE.


\bibliography{biblo}{}

\begin{thebibliography}{10}

\bibitem{abgrall_mishra}
R.~Abgrall and S.~Mishra.
\newblock Uncertainty quantification for hyperbolic systems of conservation
  laws.
\newblock {\em Handbook of Numerical Analysis}, 18:507--544.

\bibitem{fris1}
J.~Bec and K.~Khanin.
\newblock Burgers turbulence.
\newblock {\em Phys. Rep.}, 447:1--66, 2007.

\bibitem{Bertoin}
J.~Bertoin.
\newblock {The inviscid Burgers equation with Brownian initial velocity}.
\newblock {\em Comm. Math. Phys.}, 193:397--406, 1998.

\bibitem{UQbook}
Hester Bijl, Didier Lucor, Siddhartha Mishra, and Christoph Schwab~(editors).
\newblock Uncertainty quantification in computational fluid dynamics.
\newblock 92, 2014.

\bibitem{Bressan}
A.~Bressan.
\newblock Hyperbolic systems of conservation laws: The one dimensional cauchy
  problem.
\newblock {\em Oxford university press}, 200.

\bibitem{FoiasTemam}
R.~Rosa C.~Foia\c~s, O.~Manley and R.~Temam.
\newblock {\em \textit{Navier--Stokes Equations and Turbulence}. Cambridge
  University Press, 2001.}

\bibitem{Duchon2}
L.~Carraro and J.~Duchon.
\newblock Burgers equation with initial conditions with homogeneous and
  independent increments.
\newblock {\em Ann. Inst. H. Poincar\'e Anal. Non Lineare}, 199:431--458.

\bibitem{Duchon1}
L.~Carraro and J.~Duchon.
\newblock Intrinsic statistical solutions of the burgers equation and levy
  processes.
\newblock {\em C.R. Math. Acad. Sci. Paris}, 319:855--858, 1994.

\bibitem{delellis_isentropic}
Elisabetta Chiodaroli, Camillo {De Lellis}, and Ond{\v r}ej Kreml.
\newblock Global ill-posedness of the isentropic system of gas dynamics.
\newblock {\em Comm. Pure Appl. Math.}, 68(7):1157--1190, 2015.

\bibitem{Dafermos}
Constantine~M. Dafermos.
\newblock {\em Hyperbolic conservation laws in continuum physics}, volume 325
  of {\em Grundlehren der Mathematischen Wissenschaften [Fundamental Principles
  of Mathematical Sciences]}.
\newblock Springer-Verlag, Berlin, fourth edition, 2016.

\bibitem{delellis_nonunique1}
Camillo {De Lellis} and L{\'a}szl{\'o} Sz{\'e}kelyhidi, Jr.
\newblock The {E}uler equations as a differential inclusion.
\newblock {\em Ann. of Math. (2)}, 170(3):1417--1436, 2009.

\bibitem{diperna}
Ronald~J. DiPerna.
\newblock Measure-valued solutions to conservation laws.
\newblock {\em Arch. Rational Mech. Anal.}, 88(3):223--270, 1985.

\bibitem{FLM17}
U.~S. Fjordholm, S.~Lanthaler, and S.~Mishra.
\newblock Statistical solutions of hyperbolic conservation laws: foundations.
\newblock {\em Arch. Ration. Mech. Anal.}, 226(2):809--849, 2017.

\bibitem{fkmt}
Ulrik~S. Fjordholm, Roger K\"appeli, Siddhartha Mishra, and Eitan Tadmor.
\newblock Construction of approximate entropy measure-valued solutions for
  hyperbolic systems of conservation laws.
\newblock {\em Found. Comput. Math.}, 17(3):763--827, 2017.

\bibitem{Fournier:1982:CRS:358523.358553}
Alain Fournier, Don Fussell, and Loren Carpenter.
\newblock Computer rendering of stochastic models.
\newblock {\em Commun. ACM}, 25(6):371--384, June 1982.

\bibitem{wasserstein_mc_convergence}
Nicolas Fournier and Arnaud Guillin.
\newblock {On the rate of convergence in Wasserstein distance of the empirical
  measure}.
\newblock {\em Probability Theory and Related Fields}, 162(3-4):707--738, 2015.

\bibitem{giles}
Michael~B. Giles.
\newblock Multilevel {M}onte {C}arlo path simulation.
\newblock {\em Oper. Res.}, 56(3):607--617, 2008.

\bibitem{GR}
E.~Godlewski and P.A. Raviart.
\newblock {\em Hyperbolic systems of conservation laws}.
\newblock Math{\'e}matiques \& applications. Ellipses, 1991.

\bibitem{GST}
Sigal Gottlieb, Chi-Wang Shu, and Eitan Tadmor.
\newblock Strong stability-preserving high-order time discretization methods.
\newblock {\em SIAM Rev.}, 43(1):89--112, 2001.

\bibitem{Heinrich2001}
Stefan Heinrich.
\newblock Multilevel monte carlo methods.
\newblock {\em Lecture Notes in Computer Science}, pages 58--67, 2001.

\bibitem{HR}
Helge Holden and Nils~Henrik Risebro.
\newblock {\em Front tracking for hyperbolic conservation laws}, volume 152 of
  {\em Applied Mathematical Sciences}.
\newblock Springer, Heidelberg, second edition, 2015.

\bibitem{KUZNETSOV1976105}
N.N. Kuznetsov.
\newblock Accuracy of some approximate methods for computing the weak solutions
  of a first-order quasi-linear equation.
\newblock {\em USSR Computational Mathematics and Mathematical Physics},
  16(6):105--119, 1976.

\bibitem{ledoux1991}
Michel Ledoux and Michel Talagrand.
\newblock {\em Probability in {B}anach spaces}.
\newblock Classics in Mathematics. Springer-Verlag, Berlin, 2011.
\newblock Isoperimetry and processes, Reprint of the 1991 edition.

\bibitem{leveque_green}
Randall~J. LeVeque.
\newblock {\em Numerical methods for conservation laws}.
\newblock Lectures in Mathematics ETH Z{\"u}rich. Birkh{\"a}user Verlag, Basel,
  second edition, 1992.

\bibitem{Levy1965}
Paul L{\'e}vy.
\newblock {\em Processus stochastiques et mouvement brownien}.
\newblock Les Grands Classiques Gauthier-Villars. [Gauthier-Villars Great
  Classics]. {\'E}ditions Jacques Gabay, Sceaux, 1992.
\newblock Followed by a note by M. Lo{\`e}ve, Reprint of the second (1965)
  edition.

\bibitem{mandelbrot_fractional}
Benoit~B. Mandelbrot and John~W. {Van Ness}.
\newblock Fractional {B}rownian motions, fractional noises and applications.
\newblock {\em SIAM Rev.}, 10:422--437, 1968.

\bibitem{mlmc_hyperbolic}
S.~Mishra and Ch. Schwab.
\newblock Sparse tensor multi-level {M}onte {C}arlo finite volume methods for
  hyperbolic conservation laws with random initial data.
\newblock {\em Math. Comp.}, 81(280):1979--2018, 2012.

\bibitem{mss1}
S.~Mishra, Ch. Schwab, and J.~{\v S}ukys.
\newblock Multi-level {M}onte {C}arlo finite volume methods for nonlinear
  systems of conservation laws in multi-dimensions.
\newblock {\em J. Comput. Phys.}, 231(8):3365--3388, 2012.

\bibitem{sinai}
Ya.~G. Sinai.
\newblock Statistics of shocks in solutions of inviscid burgers equation.
\newblock {\em Communications in Mathematical Physics}, 148(3):601--621, Sep
  1992.

\bibitem{Voss1991}
Richard~F. Voss.
\newblock {\em Random Fractal Forgeries}, pages 805--835.
\newblock Springer Berlin Heidelberg, Berlin, Heidelberg, 1991.

\bibitem{fris2}
E.~Aurell Z-S~She and U.~Frisch.
\newblock The inviscid burgers equation with initial data of brownian type.
\newblock {\em Comm. Math. Phys.}, 148:623--641, 1992.

\end{thebibliography}
\bibliographystyle{plain}

\end{document}